\documentclass[12pt,reqno]{amsart}

\usepackage{amssymb}
\usepackage{amscd}
\usepackage{amsfonts}
\usepackage{setspace}
\usepackage{version}
\usepackage{mathrsfs}

\usepackage{graphicx}


\newtheorem{theorem}{Theorem}[section]
\newtheorem{lemma}[theorem]{Lemma}
\newtheorem{proposition}[theorem]{Proposition}
\newtheorem{corollary}[theorem]{Corollary}
\newtheorem{conjecture}{Conjecture}

\theoremstyle{definition}

\newtheorem*{usp-dash}{Conjecture \ref{usp}'}
\newtheorem*{si-conj-star}{Conjecture \textbf{(SI*)}}
\newtheorem*{ulp-conj}{Conjecture \textbf{(ULP)}}
\newtheorem*{usp-conj}{Conjecture \textbf{(USP)}}
\newtheorem*{usp-star}{Conjecture \textbf{(USP*)}}
\newtheorem*{ud-conj}{Conjecture \textbf{(UD)}}
\newtheorem*{ak-conj}{Conjecture \textbf{(AK)}}
\newtheorem*{es-conj}{Conjecture \textbf{(ES)}}
\newtheorem*{ehm-conj}{Conjecture \textbf{(EHM)}}
\newtheorem*{eak-conj}{Conjecture \textbf{(EAK)}}
\newtheorem*{ff-conj}{Conjecture \textbf{(FF)}}

\renewcommand{\leq}{\leqslant}
\renewcommand{\geq}{\geqslant}

\def\F{\mathbf{F}}
\def\R{\mathbf{R}}

\def\Z{\mathbf{Z}}

\def\P{\mathbf{P}}
\def\Q{\mathbf{Q}}
\def\N{\mathbf{N}}
\def\X{\mathsf{X}}
\def\Y{\mathsf{Y}}
\def\bH{\mathbf{H}}
\def\eps{\varepsilon}

\newcommand{\md}[1]{\ensuremath{(\operatorname{mod}\, #1)}}

\usepackage{rotating}

\parskip 1mm

\numberwithin{equation}{section}

\begin{document}

\title[Arithmetic Kakeya]{On the arithmetic Kakeya conjecture of Katz and Tao}


\author{Ben Green}
\address{Mathematical Institute, Radcliffe Observatory Quarter, Woodstock Rd, Oxford OX2 6GG}
\email{ben.green@maths.ox.ac.uk}

\author{Imre Z.~Ruzsa}
\address{Alfred R\'enyi Institute, Budapest, Re\'altanoda utca 13-15., 1053 Hungary}
\email{ruzsa@renyi.hu}

\begin{abstract}
The arithmetic Kakeya conjecture, formulated by Katz and Tao in 2002, is a statement about addition of finite sets. It is known to imply a form of the Kakeya conjecture, namely that the upper Minkowski dimension of a Besicovitch set in $\R^n$ is $n$. In this note we discuss this conjecture, giving a number of equivalent forms of it. We show that a natural finite field variant of it does hold. We also give some lower bounds.
\end{abstract}
\maketitle

\tableofcontents

\section{Introduction and statement of results}

The arithmetic Kakeya conjecture, sometimes known as the sums-differences conjecture, was formulated by Katz and Tao around fifteen years ago. It is a purely additive-combinatorial statement which, if true, would have a deep geometric consequence -- that the Minkowski dimension of Besicovitch sets in $\R^n$ is $n$. This is the celebrated Kakeya conjecture, discussed at length in many places: for an introduction see \cite{tao-needle}. 

The arithmetic Kakeya conjecture is mentioned explicitly\footnote{In the earlier paper \cite[p. 234]{katz-tao-2} of Katz and Tao, the authors only go so far as to suggest that it is ``not too outrageous tentatively to conjecture'' this statement. In fact, the conjecture made in \cite{tao-edinburgh} is over fields of ``sufficiently large characteristic'' (or characteristic zero) whereas this paper provides evidence that it is natural, and simpler, to work only in characteristic zero. We believe that in any case the statements are equivalent but have not bothered to check this carefully.}  in \cite{tao-edinburgh}. One of the main aims of this paper is to give a number of equivalent forms of the conjecture. Here is probably the simplest formulation. It is not the original one of Katz and Tao, which is Conjecture \ref{ak-conj} below. 

\begin{conjecture}\label{usp}
Let $k, N$ be positive integers. Write $F_k(N)$ for the size of the smallest set of integers containing, for each $d \in \{1,\dots, N\}$, a $k$-term arithmetic progression  with common difference $d$. Then 
\[ \lim_{k \rightarrow \infty} \lim_{N \rightarrow \infty} \frac{\log F_k(N)}{\log N} = 1.\]
\end{conjecture}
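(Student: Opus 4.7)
The upper bound $\limsup_N \log F_k(N)/\log N \leq 1$ is immediate: the interval $A = \{1, 2, \ldots, kN\}$ has size $kN$ and, for every $d \in \{1,\ldots,N\}$, contains the $k$-AP $\{1, 1+d, \ldots, 1+(k-1)d\}$. Hence $F_k(N) \leq kN$ and $\log F_k(N)/\log N \leq 1 + \log k/\log N \to 1$ as $N \to \infty$. The substance of Conjecture \ref{usp} is the matching lower bound, which asserts that one cannot do asymptotically better than this trivial construction.

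My starting point for the lower bound would be the classical ``one-half'' argument. Let $A$ be extremal, $n = |A| = F_k(N)$, and for each $d \leq N$ fix a $k$-AP $P_d = \{a_d + jd : 0 \leq j < k\} \subset A$. Since $d \in P_d - P_d \subset A - A$, we have $\{1,\ldots,N\} \subset A - A$, so the crude bound $|A - A| \leq n^2$ gives $F_k(N) \geq \sqrt{N}$ and therefore $\liminf \log F_k(N)/\log N \geq 1/2$. To push past exponent $1/2$ I would exploit iterated sumsets: the $\ell$-fold sumset $\ell A$ contains $\ell P_d$, an AP of length $\ell(k-1)+1$ with common difference $d$, for every $d \leq N$. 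Combined with the Pl\"unnecke--Ruzsa bound $|\ell A| \leq (|A+A|/n)^\ell \cdot n$ and an appropriate variant of the one-half argument applied to $\ell A$, and optimising $\ell$ as $k$ grows, the hope is to extract $F_k(N) \geq N^{1-\eps(k)}$ with $\eps(k) \to 0$. A complementary thread would invoke Freiman's theorem: if $|A+A|/|A|$ is small then $A$ lies in a generalised AP of bounded dimension, and one should be able to check by hand that a structured set of this form cannot be too small while still containing $k$-APs of every difference $d \leq N$.

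The main obstacle is that Conjecture \ref{usp} is equivalent, as the paper will show, to the upper Minkowski Kakeya conjecture -- a central open problem in harmonic analysis. No known combinatorial, Fourier-analytic, or polynomial-method technique in characteristic zero appears to yield an exponent approaching $1$; the best unconditional partial results give only $1/2 + \delta$ for a small $\delta > 0$. A realistic plan is therefore threefold: first, establish several equivalent formulations so that one may work with whichever is cleanest; second, prove the statement in a finite-field model, where Dvir's polynomial method is available; and third, extract explicit lower bounds on the exponent that improve upon $1/2$. The abstract of the present paper indicates that this is indeed the route taken.
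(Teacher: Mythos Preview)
The statement you are asked to prove is a \emph{conjecture}, and the paper does not prove it; indeed, the paper explicitly records it as open, with the best known value of $\eps$ in the equivalent entropy formulation (Conjecture \ref{eak}) being about $0.675$ (Katz--Tao), which translates to an exponent of roughly $1/(1+0.675)\approx 0.597$ in Conjecture \ref{usp}. Your trivial upper bound is correct, and your recognition that the lower bound is the real content is also correct. Your eventual threefold plan --- equivalent reformulations, a finite-field analogue via polynomial-method bounds, and explicit partial results --- is exactly what the paper carries out (Theorem \ref{mainthm}, Theorem \ref{thm13}, and Theorem \ref{ck-lower} respectively), without resolving the conjecture itself.

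Two points deserve correction. First, you assert that Conjecture \ref{usp} is \emph{equivalent} to the upper Minkowski Kakeya conjecture. The paper is careful to say only that it \emph{implies} Kakeya, and argues in remark (2) after Theorem \ref{mainthm} that the arithmetic version should be regarded as \emph{strictly harder}: Bourgain observed that already the weaker statement $\lim_N \log F_{N^\eta}(N)/\log N \geq 1$ suffices for Kakeya. Second, your hope that Pl\"unnecke--Ruzsa combined with Freiman will push the exponent to $1-\eps(k)$ with $\eps(k)\to 0$ runs directly into Theorem \ref{ck-lower}: the paper \emph{constructs} sets showing $\lim_N \log F_k(N)/\log N \leq 1 - c/\log\log k$, so any approach that works must be fine enough to produce an error term decaying no faster than $1/\log\log k$. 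The iterated-sumset line you sketch is closer in spirit to the arguments behind the $0.597$ bound than to a full resolution.
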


This conjecture was raised by the second author as \cite[Conjecture 4.2]{ruzsa-sumsets}, but no links to the Kakeya problem were mentioned there. \vspace{8pt}

We turn now to arguably the most natural of our formulations, concerning the entropy of random variables. As usual, the entropy $\bH$ of a random variable $\X$ with finite range is defined by
\[ \bH(\X) := -\sum_{x} \P(\X = x) \log \P(\X = x),\] where $x$ ranges over all values taken by $\X$.

\begin{conjecture}\label{eak}
Suppose that $\X$ and $\Y$ are two real-valued random variables, both taking only finitely many values. Then for any $\eps > 0$ there are\footnote{It is often convenient to ``work projectively'' and allow the $r_i$ to take values in $\Q \cup \{\infty\}$, where we define $\X + \infty \Y = \Y$. The two versions of Conjecture \ref{eak} are equivalent to one another, as may easily be seen by applying a projective transformation such as $\X' = (a + 1)\X$, $\Y' = a\X + \Y$ which preserves $\X - \Y$ but moves other rational combinations around.} $r_1,\dots, r_k \in \Q$, none equal to $-1$, so that 
\[ \bH(\X - \Y) \leq (1 + \eps) \sup_j \bH(\X + r_j \Y).\] 
\end{conjecture}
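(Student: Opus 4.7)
The plan is to attack Conjecture \ref{eak} directly, as a statement about the joint law $\mu = \mathrm{Law}(\X, \Y)$ on $\R^2$. By a standard discretization---scaling by a large integer, perturbing slightly to ensure distinctness, and truncating---one first reduces to the case where $\X$ and $\Y$ are integer-valued, so that $\mu$ is supported on a finite subset of $\Z^2$ and, for each $r \in \Q \cup \{\infty\}$, the quantity $\bH(\X + r\Y)$ is the entropy of the pushforward of $\mu$ under the linear projection $\pi_r : (x, y) \mapsto x + ry$. Conjecture \ref{eak} then becomes the assertion that, among these pushforwards, the projection $\pi_{-1}$ is not an isolated entropic peak on $\Q \cup \{\infty\}$.

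The central strategy is to exploit submodularity of entropy for sums of independent random variables, via entropic Pl\"unnecke--Ruzsa type inequalities. Concretely, one introduces independent copies $(\X^{(i)}, \Y^{(i)})$ of $(\X, \Y)$ and applies standard submodularity inequalities (of the flavour $\bH(\X_1 + \X_2 + \X_3) + \bH(\X_3) \leq \bH(\X_1 + \X_3) + \bH(\X_2 + \X_3)$ for independent $\X_1, \X_2, \X_3$) with arguments of the form $a \X^{(i)} + b \Y^{(i)}$, letting the integer pairs $(a, b)$ range over a carefully chosen finite set. This produces an algebra of inequalities among the values $\bH(\X + r\Y)$ at different rationals $r$, and the aim is to combine them into the conjectured bound
\[ \bH(\X - \Y) \leq (1 + \eps) \sup_{j} \bH(\X + r_j \Y) \]
for a suitable finite family $\{r_j\} \subset \Q \setminus \{-1\}$ depending on $\eps$ (and possibly on crude global invariants of $\mu$).

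The main obstacle I foresee is the \emph{slicing obstruction} familiar from the Kakeya literature. Concentration of $\mu$ on lines of slope $1$ (so that $\X - \Y$ already has small entropy) is harmless, but the dual situation---in which $\mu$ is spread along the graph of a mildly nonlinear function, so that $\X - \Y$ exhibits cancellation \emph{invisible} to every $\X + r\Y$ with $r \neq -1$---is exactly what Conjecture \ref{eak} forbids. I do not expect any fixed finite chain of submodularity inequalities to reach this regime on its own; genuinely new global input, perhaps from a Balog--Szemer\'edi--Gowers style structure theorem applied to the level sets of $\X - \Y$, together with a bilinearisation step to extract the rationals $r_j$ from the resulting structure, will likely be needed.

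A reasonable intermediate goal, before attacking the full conjecture, is to verify it in model cases: uniform measures on Cartesian products $A \times B$, on graphs of affine maps or of low-degree polynomial maps, and on convolutions of these. In each such case one can compute $\bH(\X + r\Y)$ essentially explicitly and so test which finite families $(r_1, \ldots, r_k)$ suffice as a function of $\eps$. Such calculations should both confirm the conjecture in these model regimes and sharpen intuition for which finite list of rationals the general attack ought to produce.
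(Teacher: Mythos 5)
What you have written is a plan of attack, not a proof, and you say so yourself. More to the point, Conjecture~\ref{eak} \emph{is} the arithmetic Kakeya conjecture; it is open, and the paper does not prove it. The paper's contribution is to show that Conjecture~\ref{eak} is equivalent to four other statements (Theorem~\ref{mainthm}), to prove a finite-field analogue (Theorem~\ref{thm13}), and to give a lower bound (Theorem~\ref{ck-lower}) showing that the convergence in the equivalent Conjecture~\ref{usp} must be slow. The paper also records that the best known value of $\eps$ in Conjecture~\ref{eak} is $\eps \approx 0.67513$, due to Katz and Tao, a bound that has not moved in over fifteen years.

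Your proposed mechanism --- reduce to $\Z$-valued variables, tensorise with independent copies, and run entropic Pl\"unnecke--Ruzsa/submodularity inequalities among the $\bH(a\X + b\Y)$ --- is precisely the Katz--Tao method that produces $\eps \approx 0.675$, and you correctly anticipate that it stalls short of arbitrary $\eps$. The decisive obstruction is the one the paper highlights after Conjecture~\ref{ff-conj}: by Theorem~\ref{mainthm}, Conjecture~\ref{eak} implies the finite-field Kakeya statement, and the only known proof of that (Dvir) is the polynomial method, which has no analogue in a purely submodularity-based scheme. Your hope that a Balog--Szemer\'edi--Gowers structure theorem plus ``bilinearisation'' will supply the missing global input is not implausible as a direction, but it is not an argument, and no step in your outline engages the algebraic/polynomial structure that the finite-field equivalence suggests is essential. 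The model-case computations you propose (uniform measures on products, graphs of polynomials, Mockenhaupt--Tao-type examples --- see the remarks at the end of \S\ref{fin-field-ent}) are a sensible warm-up, but the gap is the proof itself.
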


Next we give the original form of the conjecture discussed by Katz and Tao. Let $A \subset \Z \times \Z$ be a finite set. For rational $r$ we write $\pi_r(A) := \{ x + ry : (x,y) \in A\}$. We also write $\pi_{\infty}(A) := \{ y : (x,y) \in A\}$.

\begin{conjecture}\label{ak-conj} Let $\eps > 0$ be arbitrary. Then there are $r_1,\dots, r_k \in \Q \cup \{\infty\}$, none equal to $-1$, such that $\#\pi_{-1}(A) \leq \sup_{i} \# \pi_{r_i}(A)^{1 + \eps}$ for all finite sets $A \subset \Z \times \Z$.
\end{conjecture}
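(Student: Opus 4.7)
The plan is to work through the entropy formulation, Conjecture \ref{eak}, which is equivalent to Conjecture \ref{ak-conj} via the reductions developed in this paper. The entropic version is more convenient because entropy is additive under independent sums and satisfies Pl\"unnecke--Ruzsa-type inequalities, in particular the entropic Ruzsa triangle inequality $\bH(\X - \Y) \leq \bH(\X - \Z) + \bH(\Z - \Y) - \bH(\Z)$ for independent $\X, \Y, \Z$. Translating back to sets, any bound one obtains on $\bH(\X - \Y)$ in terms of $\sup_j \bH(\X + r_j \Y)$ will deliver a bound on $|\pi_{-1}(A)|$ in terms of $\sup_j |\pi_{r_j}(A)|$ by the standard uniformisation-by-Shannon-entropy device.

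Step one is the trivial exponent-$2$ bound: taking $r_1 = 0$, $r_2 = \infty$ yields $\bH(\X - \Y) \leq \bH(\X) + \bH(\Y) \leq 2 \max(\bH(\X), \bH(\Y))$, which is Conjecture \ref{ak-conj} with exponent $2$ in place of $1 + \eps$. Step two is to enlarge the slope set and apply the entropic triangle inequality with $\Z$ chosen as an independent copy of some $\X + r_j \Y$; combined with Pl\"unnecke--Ruzsa inequalities along each slope one should hope to deliver an exponent $c$ bounded strictly below $2$ for a suitable finite set of slopes. Step three, and the decisive one, is a bootstrap from any such $c$ to $1 + \eps$. A naive iteration---applying the conjecture with exponent $c$ to the pair $(\X, -r_j \Y)$, which translates $\bH(\X + r_j \Y) = \bH(\X - (-r_j\Y))$ into a supremum over slopes $-r_i r_j$---produces exponent $c^2$, which is \emph{worse}; so any bootstrap must be more subtle, perhaps combining tensor powers with a coupling between slope choices in different coordinates that reduces the effective exponent while keeping the total slope count bounded by a function of $\eps$ alone.

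The main obstacle is exactly this bootstrap step, and it is formidable: Conjecture \ref{ak-conj} has been open since Katz and Tao introduced it around 2002, and it implies the full Kakeya conjecture on the upper Minkowski dimension of Besicovitch sets in $\R^n$. No purely additive-combinatorial device presently known closes the gap from an exponent bounded above $1$ to $1 + \eps$; tensor-power arguments preserve the exponent, and the iteration sketched above makes it worse. An honest proof would likely require a genuinely new structural input---for instance a rigidity theorem ruling out sets $A \subset \Z \times \Z$ with many small projections $\pi_{r_j}(A)$ but an anomalously large $\pi_{-1}(A)$---of a strength comparable to what would resolve Kakeya itself. Given the status of the problem, the best one can realistically aim for in this proposal is a careful execution of steps one and two, with step three flagged as the conjectural input.
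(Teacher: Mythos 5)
The statement you were asked about is labelled a \emph{conjecture} in the paper, and indeed it is an open problem; the paper itself does not contain a proof of Conjecture~\ref{ak-conj}, only a proof that it is equivalent to Conjectures~\ref{usp}, \ref{eak}, \ref{ff-conj}$(n)$ and \ref{es-conj}, together with the lower bound of Theorem~\ref{ck-lower} and the finite-field analogue Theorem~\ref{thm13}. So there is no ``paper's own proof'' to compare your proposal against, and it would be a serious error to present any argument as if it settled the matter.

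That said, your proposal is honest about this and does not overclaim. Your step one is correct: with $r_1 = 0$, $r_2 = \infty$ one has $\bH(\X-\Y) \leq \bH(\X,\Y) \leq \bH(\X) + \bH(\Y) \leq 2\max(\bH(\X),\bH(\Y))$, giving exponent $2$. Your step two is consistent with the literature the paper cites: the paper records that the best known exponent is $1+\eps$ with $\eps \approx 0.67513$ (the root of $\alpha^3-4\alpha+2=0$ minus one), due to Katz and Tao \cite{katz-tao-2}, and that this has not been improved in fifteen years. Your observation in step three that a naive iteration squares the exponent---and so moves in the wrong direction---is a correct and well-known obstruction; likewise tensor powering merely preserves the exponent, as the paper's own tensor-power reduction (used to prove the equivalences, not to improve the exponent) illustrates. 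The paper's remark after Conjecture~\ref{ff-conj} goes further and suggests that any proof of the arithmetic Kakeya conjecture will likely require a polynomial-method input, since the finite-field consequence is only known via Dvir's polynomial method; this is a more specific diagnosis of what ``genuinely new structural input'' might be needed than the rigidity heuristic you offer, and you may wish to fold it into your discussion. In short: you have correctly identified the status of the problem, correctly sketched the trivial and the currently best partial bounds, and correctly flagged the bootstrap from $c<2$ to $1+\eps$ as the missing ingredient. What you must not do is let the document's framing suggest that steps one and two together constitute progress toward step three; as you yourself note, no known additive-combinatorial device closes that gap.
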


Our fourth conjecture has not, so far as we are aware, appeared explicitly in the literature before. It is in fact a whole family of conjectures, one for each natural number $n$; however, we will later show that all of these are equivalent.

\begin{conjecture}[$n$]\label{ff-conj}
Let $k$ be a positive integer. If $p$ is a prime, let $f_{k,n}(p)$ denote the size of the smallest set containing, for every $d \in \F_p^n \setminus \{0\}$, a $k$-term progression with common difference $d$. Then 
\[ \lim_{k \rightarrow \infty} \lim_{p \rightarrow \infty} \frac{\log f_{n,k}(p)}{\log p} = n.\]
\end{conjecture}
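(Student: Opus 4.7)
The plan is to establish the lower bound $f_{n,k}(p) \ge p^{n - o_k(1)}$, matching the trivial upper bound $f_{n,k}(p) \le p^n$ obtained by $S = \F_p^n$. My first observation is that $f_{n,k}(p)$ depends only on the additive group structure of $\F_p^n$. Since this group is isomorphic to the additive group of the field $\F_{p^n}$, we have $f_{n,k}(p) = f_{1,k}(p^n)$, extending the definition of $f_{1,k}$ to prime-power fields $\F_q$ in the obvious way. Thus it suffices to prove the one-dimensional statement: if $S \subset \F_q$ contains a $k$-term AP of every nonzero common difference, then $|S| \ge q^{1-o_k(1)}$ as $q \to \infty$ through prime powers.

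For the one-dimensional case, set $m = |S|$. The hypothesis provides two pieces of information: (i) the total count $T_k(S)$ of $k$-APs in $S$ satisfies $T_k(S) \ge q-1$; and (ii) $S - S = \F_q$. The ``random'' heuristic $T_k(S) \approx m^k/q^{k-2}$, combined with (i), suggests the target bound $m \ge q^{(k-1)/k}$, which via $q = p^n$ gives the conjecture (as $(k-1)/k \to 1$). To rigorize, I would expand $T_k(S)$ via Fourier inversion on $\F_q$: the main term from the zero frequency is $m^k/q^{k-2}$, and the error is controlled by the Gowers $U^{k-1}$-norm $\|1_S\|_{U^{k-1}}$. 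Using (ii), I would argue that $S$ must be spread out in Fourier — any large Fourier coefficient (or, for $k \geq 4$, any large higher-order correlation) would force Bohr-type concentration of $S-S$, contradicting $|S-S| = q$. This keeps $\|1_S\|_{U^{k-1}}$ small, so that $T_k(S) = (1+o(1))m^k/q^{k-2}$, and combining with (i) gives $m \ge q^{(k-1)/k - o(1)}$. Taking $q = p^n$ and iterating the limits completes the proof.

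The main obstacle is the uniformity step. The naive Parseval bound $\sum_\xi |\widehat{1_S}(\xi)|^2 = qm$ only yields $\max_{\xi \ne 0}|\widehat{1_S}(\xi)| \gtrsim m^{3/2}/\sqrt{q}$, which is not small enough to make the Fourier error in $T_k(S)$ negligible against the main term $m^k/q^{k-2}$ in the regime $m \approx q^{(k-1)/k}$. A more delicate argument is required, showing that the $L^4$ (or higher $L^{2s}$) mass of $\widehat{1_S}$ is spread across many frequencies rather than concentrated on a few. For $k=3$ this should be doable by a direct Plancherel argument exploiting that (ii) forces the convolution $1_S * 1_{-S}$ to be supported on all of $\F_q$. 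For $k \ge 4$ the higher Gowers norms escape pure Fourier analysis, and one must either invoke a quantitative inverse theorem in the spirit of Green--Tao--Ziegler, or proceed inductively on $k$, bootstrapping the required uniformity at level $k$ from the $(k-1)$-AP statement.
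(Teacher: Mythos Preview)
Your proposal attempts to \emph{prove} Conjecture~\ref{ff-conj}, but the paper does not prove it: it is an open conjecture, and the paper's contribution is to show it is equivalent to the arithmetic Kakeya conjecture of Katz and Tao. So there is no ``paper's own proof'' to compare against, and your task reduces to whether the sketched argument actually works. It does not, for two independent and concrete reasons.

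First, your target lower bound $|S| \ge q^{(k-1)/k}$ is \emph{false} for large $k$. Theorem~\ref{ck-lower} constructs, for each $k$, sets in $\Z$ witnessing $F_k(N) \le N^{1 - c/\log\log k}$; the transfer argument in \S 3 (the implication ``Conjecture~\ref{ff-conj} implies Conjecture~\ref{usp}'') then yields, for $q = p^n$, sets $S \subset \F_p^n$ containing a $k$-AP in every direction with $|S| \ll_{k,n} (\log p)\, q^{1 - c/\log\log k}$. For $k$ large enough that $c/\log\log k > 1/k$, this is strictly smaller than $q^{(k-1)/k}$, directly contradicting your goal. So the ``random heuristic'' $T_k(S) \approx m^k/q^{k-2}$ cannot hold for the extremal sets, and any argument aiming at the exponent $(k-1)/k$ must fail.

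Second, the specific mechanism you propose for controlling the Gowers error --- that $S - S = \F_q$ forces $\|1_S\|_{U^{k-1}}$ to be small --- is false already at the Fourier level. Take $S$ to be the union of an interval $\{0,1,\dots,\lfloor q^{2/3}\rfloor\}$ and a random set of size $q^{2/3}$ in $\F_q$. The random part makes $S - S = \F_q$ with high probability, yet the interval contributes a Fourier coefficient $|\widehat{1_S}(\xi)|$ of order $|S|$ at small $\xi$, so $S$ is maximally non-uniform. The implication ``large Fourier coefficient $\Rightarrow$ small difference set'' only goes one way; knowing $|S-S|=q$ gives essentially no control on $\widehat{1_S}$ or on higher Gowers norms. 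The inductive bootstrap you mention for $k\ge 4$ inherits the same obstruction.

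In short, the conjecture is open, the quantitative target you set is disproved by the paper's own construction, and the uniformity step has explicit counterexamples.
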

\emph{Remarks.} Note that $f_{p,n}(p)$ is the size of the smallest \emph{Besicovitch set} in $\F_p^n$, that is to say set containing a full line in every direction. Since $f_{p,n}(p) \geq f_{k,n}(p)$ whenever $p \geq k$, Conjecture \ref{ff-conj}($n$) trivially implies that 
\[ \lim_{p \rightarrow \infty} \frac{f_{p,n}(p)}{\log p} = n,\] i.e. any Besicovitch set in $\F_p^n$ has size $p^{n - o_{p\rightarrow \infty}(1)}$.
This is known to be true, a celebrated result of Dvir \cite{dvir}. However, the only known arguments use the ``polynomial method'' (see, for example, \cite{guth-polynomial, tao-polynomial} for modern introductions). This very strongly hints that any proof of Conjecture \ref{ff-conj} (and hence, by our main theorem, of the other conjectures) would have to use some form of the polynomial method.\vspace{8pt}

Our fifth and final conjecture is included mainly for historical interest, as it relates very closely to a question asked by Erd\H{o}s and Selfridge in the 1970s, well before the current wave of interest in the Kakeya problem and related matters.

\begin{conjecture}\label{es-conj}
Fix a positive integer $k$. Let $N$ be a positive integer. Then, uniformly for all $N$, all finite sets $p_1 < \dots < p_N$ of primes and all intervals $I \subset \N$ of length $kp_N$, we have
\[ \# \big( I \cap \bigcup_{i = 1}^N p_i \Z \big) \gg_k N^{1 - \gamma_k}.\] where $\gamma_k \rightarrow 0$ as $k \rightarrow \infty$.
\end{conjecture}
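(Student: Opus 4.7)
The plan is to deduce Conjecture~\ref{es-conj} from an extension of Conjecture~\ref{usp}, using sieving to supply the required arithmetic progressions. Because $|I| = kp_N \geq kp_i$ for every $i$, the interval $I$ contains at least $k$ consecutive multiples of $p_i$, which form a $k$-term arithmetic progression of common difference $p_i$ lying inside $S := I \cap \bigcup_j p_j\Z$. So $S$ is a witness set containing a $k$-AP with common difference $d$ for every $d$ in the $N$-element set $D := \{p_1, \dots, p_N\}$, and Conjecture~\ref{es-conj} reduces to the statement that any such $S$ has cardinality $\gg_k N^{1-\gamma_k}$ with $\gamma_k \to 0$.

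This reduced statement is a generalization of Conjecture~\ref{usp} allowing arbitrary $N$-element difference sets; primality plays no further role. To establish it, I would transfer to Conjecture~\ref{ak-conj} by encoding the $k$-APs $\{a_d + id\}_{i=0}^{k-1} \subset S$ (one chosen per $d \in D$) as the planar set
\[
A := \{ (id, \; a_d + id) : d \in D, \; 0 \leq i \leq k-1 \} \subset \Z \times \Z.
\]
One then has $\pi_\infty(A) \subset S$, and after a generic shift ensuring the starting points $a_d$ are distinct, $\pi_{-1}(A) = \{-a_d\}$ has exactly $N$ elements. Conjecture~\ref{ak-conj} then yields $N \leq \sup_i |\pi_{r_i}(A)|^{1+\eps}$ for a fixed finite set of rationals $r_i \neq -1$ depending only on $\eps$.

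The main obstacle will be bounding $|\pi_r(A)|$ by $|S|^{1+o(1)}$ for each $r$ in this list. A direct count gives only the crude bound $|\pi_r(A)| \leq k|D| = kN$, which is inadequate. The identity
\[
id(1+r) + ra_d = (1+r)(a_d + id) - a_d
\]
places $\pi_r(A)$ inside the Minkowski difference $(1+r)S - S$, so it suffices to prove a Pl\"unnecke--Ruzsa type bound controlling $|(1+r)S - S|$ by $|S|^{1+o(1)}$. This is where the bulk of the technical work lies: $S$ need not have small doubling a priori, so one likely has to exploit the many $k$-APs inside $S$ (each with doubling constant $\approx 2$) via a covering or union argument, or pass to a suitably structured subset. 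Once the projection bounds are in hand, Conjecture~\ref{ak-conj} closes the argument with $|S| \gg N^{1-O(\eps)}$; letting $\eps \to 0$ as $k \to \infty$ with the quantitative dependence tracked through Conjecture~\ref{ak-conj} recovers $\gamma_k \to 0$. The projective invariance noted in the footnote to Conjecture~\ref{eak} absorbs the $r = \infty$ case into the same framework.
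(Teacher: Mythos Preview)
Your first paragraph is correct and is exactly the paper's argument: since $|I| \geq kp_i$, the set $S = I \cap \bigcup_i p_i\Z$ contains a $k$-term progression of difference $p_i$ for each $i$, so $S$ is a witness for the quantity $F'_k(N)$ (the paper's Conjecture~\ref{usp}', the variant of Conjecture~\ref{usp} with an arbitrary $N$-element difference set). This gives $G_k(N) \geq F'_k(N)$, which is the easy half of the paper's Proposition~\ref{prop4point1}.

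The gap is in your second step, deducing this variant from Conjecture~\ref{ak-conj}. With your encoding $A = \{(id,\,a_d+id)\}$ the projection $\pi_r(A)$ lands only in $(1+r)\cdot S - \{a_d : d\in D\}$, and bounding $|(1+r)\cdot S - S|$ by $|S|^{1+o(1)}$ would require $S$ to have small doubling. Nothing in the hypotheses gives this, and in fact the extremal sets (e.g.\ the construction in Section~5 of the paper) do not have small doubling. The Pl\"unnecke--Ruzsa route is a dead end here, and ``passing to a structured subset'' would throw away the very progressions you need.

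The fix is to change the encoding so that each $\pi_{r_j}(A)$ lies in a single dilate of $S$, not a sumset. Given $\eps>0$, first fix $r_1,\dots,r_m$ from Conjecture~\ref{ak-conj}, then choose integers $Q,M$ with each $\tfrac{Qr_j}{1+r_j}$ an integer in $(-MQ,MQ)$, set $k = 2MQ$, and put \emph{one} point per difference:
\[
A = \bigl\{\,(a_d + MQd,\; a_d + (M{+}1)Qd) : d \in D\,\bigr\}.
\]
Then $\pi_{-1}(A) = \{-Qd : d\in D\}$ has exactly $N$ elements (no genericity needed), while $\tfrac{1}{1+r_j}\pi_{r_j}(A) = \{a_d + (MQ + \tfrac{Qr_j}{1+r_j})d\} \subset S$, so $\#\pi_{r_j}(A) \leq \#S$. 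Conjecture~\ref{ak-conj} now gives $N \leq (\#S)^{1+\eps}$ directly. This is precisely the mechanism the paper uses (phrased there via entropy, in the implication Conjecture~\ref{eak} $\Rightarrow$ Conjecture~\ref{usp}); alternatively, you could simply invoke the paper's Proposition~\ref{prop21}, which shows $F_k(N) \ll_k (\log N)\,F'_k(N)$ and hence that Conjecture~\ref{usp} already implies its own extension to arbitrary difference sets.
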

\emph{Remark.} Erd\H{o}s and Selfridge \cite[\S 6]{Erdos-1978} in fact asked whether or not one can take $\gamma_k = 0$. The second-named author \cite{ruzsa-few} showed that the answer is no, and in fact we must have $\gamma_k \geq \frac{1}{k}$. We note that Proposition \ref{prop4point1} and Theorem \ref{ck-lower} combine to give the much better bound $\gamma_k \gg \frac{1}{\log \log k}$.\vspace{8pt}

As previously stated, our main result is the equivalence of the five conjectures stated above.

\begin{theorem}\label{mainthm}
Conjectures \ref{usp}, \ref{eak}, \ref{ak-conj}, \ref{ff-conj}\textup{(}$n$\textup{)} \textup{(}for each $n = 1,2,3,\dots$\textup{)} and \ref{es-conj} are all equivalent.
\end{theorem}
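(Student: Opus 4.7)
The plan is to prove Theorem \ref{mainthm} by threading all five statements through a cycle of implications, with Conjecture \ref{ak-conj} as the central pivot, since each of the other four has a clean combinatorial encoding into the Katz--Tao projection framework.

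The equivalence \ref{ak-conj}$\Leftrightarrow$\ref{eak} follows from the standard Ruzsa--Tao dictionary between set-cardinality and entropy. Taking $(\X,\Y)$ uniform on a finite $A \subset \Z \times \Z$ gives $\bH(\X + r\Y) = \log \#\pi_r(A) - O(1)$, which passes \ref{eak} directly to \ref{ak-conj}. Conversely, applying \ref{eak} to the tensor power $(\X,\Y)^{\otimes n}$ and extracting cardinalities from entropies via concentration on typical sets recovers \ref{ak-conj} in the limit $n \to \infty$, with the additive losses absorbed into the factor $1+\eps$.

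For \ref{ak-conj}$\Leftrightarrow$\ref{usp}, the forward direction encodes a small set $S$ witnessing $F_k(N)$, containing $k$-APs $a_d, a_d + d, \dots, a_d + (k-1)d$ for each $d \in [1,N]$, as the lifted configuration $A = \{(i, a_d + id) : 0 \leq i < k,\ d \in [1,N]\} \subset \Z \times \Z$. One checks that $\pi_\infty(A) \subseteq S$ and $\pi_0(A) = [0,k)$, while for each rational $r = -1/d_0$ the slope-$d_0$ line of $A$ collapses to a single point, so for a suitable finite family $\{r_j\}$ each $\#\pi_{r_j}(A)$ is $O(k|S|)$; meanwhile $\pi_{-1}(A)$ can be shown to have size $\gg_k N$ (being a union of $N$ short APs with distinct common differences). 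Thus \ref{ak-conj} forces $|S| \geq N^{1-o(1)}$ as $k \to \infty$. The reverse implication \ref{usp}$\Rightarrow$\ref{ak-conj} routes through \ref{eak}: a general two-dimensional set is reduced to an AP-like configuration using Pl\"unnecke--Ruzsa-type bounds on iterated sumsets, after which an upper bound on $F_k(N)$ controls $\#\pi_{-1}(A)$.

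The equivalence \ref{usp}$\Leftrightarrow$\ref{es-conj} is a matter of restricting common differences to primes: \ref{es-conj} follows from \ref{usp} applied with the $d$'s being the primes $p_i$, and conversely every $d \in [1,N]$ can be replaced by a prime in $[d, 2d]$ via Bertrand's postulate, the resulting logarithmic losses being absorbed into $\gamma_k$. For \ref{ak-conj}$\Leftrightarrow$\ref{ff-conj}($n$), the implication from $\Z$ to $\F_p^n$ reduces a lifted configuration modulo a generic large prime $p$ after an affine change of variables that prevents collapses under any $\pi_{r_j}$; conversely, a set in $\F_p^n$ witnessing $f_{k,n}(p)$ is lifted to a subset of $\Z^{n+1}$ by taking $[0,p)$-representatives together with an extra coordinate recording the line's direction, and \ref{ak-conj} is applied iteratively to each coordinate pair. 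The different values of $n$ in \ref{ff-conj} are then equivalent transitively through \ref{ak-conj}. I expect \ref{usp}$\Rightarrow$\ref{ak-conj} to be the main obstacle, since one must bootstrap a statement about the rigid geometry of APs with common differences in $[1,N]$ into a statement about arbitrary projections of arbitrary two-dimensional sets.
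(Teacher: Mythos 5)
Your overall cycle structure is reasonable, but several of the individual implications have genuine gaps, and two of them fail outright as described.

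The most serious problem is the USP~$\Leftrightarrow$~ES step. Bertrand's postulate is useless here. The hard direction is to take a set $A$ witnessing $F'_k(N)$ (containing $k$-APs with common differences $d_1 < \dots < d_N$) and manufacture from it a configuration of the form $I \cap \bigcup p_i\Z$ of comparable size. Replacing each $d$ by a prime in $[d,2d]$ destroys the progression structure: if $A$ contains an AP with common difference $d$, nothing forces $A$ to contain an AP with common difference $p$. The paper's proof of Proposition~\ref{prop4point1} invokes the Green--Tao theorem \cite{green-tao} to find $u,v$ with all of $d_1 u + v, \dots, d_N u + v$ \emph{simultaneously} prime and confined to a short interval; then $u \cdot A + \{0, v, \dots, (k-1)v\}$ contains $k$-APs with common differences $p_i = d_i u + v$, and a careful choice of $I$ and congruence class (via CRT) realises this union as $I \cap \bigcup p_i\Z$. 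This is a fundamentally different mechanism from anything Bertrand's postulate provides, and the losses involved are not ``logarithmic losses absorbed into $\gamma_k$'' but a multiplicative factor of $k$.

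Your encoding for AK~$\Rightarrow$~USP also does not work. With $A = \{(i, a_d + id) : 0 \leq i < k,\, d \in [1,N]\}$ you have $\pi_\infty(A) \subseteq S$ and $\#\pi_0(A) = k$, but for a generic $r \notin \{0,\infty\}$ the map $(i,d) \mapsto i(1+rd) + r a_d$ can easily be injective on all $kN$ pairs, so $\#\pi_r(A)$ is of order $kN$, not $O(k|S|)$. The collapse you note at $r = -1/d_0$ only removes one of the $N$ fibres. Crucially, Conjecture~\ref{ak-conj} hands you a \emph{fixed} family $\{r_j\}$ that works for all $A$; you do not get to choose $\{0,\infty\}$, and you must therefore control $\#\pi_{r_j}(A)$ for whatever $r_j$'s the conjecture supplies. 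The working encoding (this is what the paper does, in random-variable form, for \ref{eak}~$\Rightarrow$~\ref{usp}) is the one-parameter family
\[
A = \bigl\{\bigl(a_d + MQd,\ a_d + (M+1)Qd\bigr) : d \in [1,N]\bigr\},
\]
with $Q,M$ chosen \emph{after} the family $\{r_j\}$ is known so that each $MQ + Qr_j/(1+r_j)$ is an integer in $\{0,\dots,k-1\}$; then $\pi_{-1}(A) = \{-Qd : d\}$ has size exactly $N$ while each $\pi_{r_j}(A)$ is (after rescaling) contained in $S$. The claim that $\#\pi_{-1}(A) \gg_k N$ for your lifted $A$, ``being a union of $N$ short APs with distinct common differences,'' is also unjustified: two length-$k$ APs with distinct common differences can overlap in $\Theta(k)$ points, so a union of $N$ of them need not have size $\gg N$ without further argument.

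Two further issues. First, ``taking $(\X,\Y)$ uniform on $A$'' does not give $\bH(\X + r\Y) = \log\#\pi_r(A) - O(1)$; the pushforward of a uniform measure under $\pi_r$ can be wildly non-uniform, giving $\bH(\X + r\Y)$ as small as a constant fraction of $\log\#\pi_r(A)$. What is true (and what you need) is the one-sided bound $\bH(\X + r\Y) \leq \log\#\pi_r(A)$, paired with a distribution on $A$ chosen so that $\X - \Y$ is uniform on $\pi_{-1}(A)$ rather than $(\X,\Y)$ uniform on $A$. Second, the USP~$\Rightarrow$~AK direction via ``Pl\"unnecke--Ruzsa-type bounds'' is not a plan; the paper proves this via an intermediate statement USP$'$ (Proposition~\ref{prop21}, a random-projection argument) and then a concrete construction: given $A_{k,j}$ violating AK, one forms $S_{k,j} = \bigcup_{1\le i\le k}\bigcup_{r} \frac{i}{k}\cdot\pi_r(A_{k,j})$ and uses the identity $x + \frac{i}{k}(y-x) = \frac{k-i}{k}\bigl(x + \frac{i}{k-i}y\bigr)$ to see that $S_{k,j}$ contains $k$-term progressions with many distinct common differences, contradicting USP$'$. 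Similarly the FF($n$) equivalences, as sketched, are too vague to assess; the paper's route (unwrapping $\F_p^n \to \Z^n$, a cut-and-move compactification, base-$M$ digit encoding $\Z \to \Z^n$, projection to $\F_p^n$, and random translation to cover all directions) involves nontrivial bookkeeping that your one-line sketch does not capture.
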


Let us make some further remarks.

\begin{enumerate}
\item Once Theorem \ref{mainthm} is proven, it seems natural to use the term ``arithmetic Kakeya conjecture'' to refer to any one of the five conjectures.
\item It is known that Conjecture \ref{ak-conj} (and hence all the other conjectures) implies that the upper Minkowski dimension of any Besicovitch set\footnote{That is, a compact subset of $\R^n$ containing a unit line segment in every direction.} in $\R^n$ is $n$, a statement often referred to as the \emph{Kakeya conjecture}. This follows by a straightforward generalisation of the ``slicing'' argument of Bourgain \cite{bourgain-3}: a sketch of this may be found in \cite{tao-needle}. However, Bourgain \cite{bourgain-1,bourgain-2} observed that, in the notation of Conjecture \ref{usp}, the statement
\begin{equation}\label{ulp} \lim_{N \rightarrow \infty} \frac{\log F_{N^{\eta}}(N)}{\log N} \geq 1\end{equation} for all $\eta > 0$ also implies the Kakeya conjecture. Since $F_k(N)$ is a nondecreasing function of $k$, \eqref{ulp} is immediately implied by Conjecture \ref{usp}, whilst an implication in the reverse direction seems very unlikely without resolving both conjectures. In this sense, the arithmetic Kakeya conjecture should be considered a \emph{strictly} harder problem than the Kakeya conjecture.
\item The equivalence of Conjectures \ref{eak} and \ref{ak-conj} was proven by the second author in \cite{ruzsa-sumsets-entropy} (see also \cite{lemm}). We are not aware of any references for the other implications.
\end{enumerate}

Now we discuss the other results in the paper. First, we establish a lower bound showing that the convergence in Theorem \ref{usp}, if it occurs, is very slow.

\begin{theorem}\label{ck-lower}
In the notation of Conjecture \ref{usp}, we have
\[ \lim_{N \rightarrow \infty} \frac{\log F_k(N)}{\log N} \leq 1- \frac{c}{\log \log k},\] where the constant $c > 0$ is absolute.
\end{theorem}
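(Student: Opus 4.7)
Plan: My goal is to construct, for each large $k$ and each $N \to \infty$, an explicit set $S \subseteq \mathbf{Z}_{\geq 1}$ with $|S| \leq N^{1 - c/\log\log k + o_N(1)}$ containing a $k$-term arithmetic progression of every common difference $d \in \{1, \dots, N\}$. My plan is to route the problem through the Erd\H{o}s--Selfridge formulation (Conjecture \ref{es-conj}) via Proposition \ref{prop4point1}, reducing to the construction of $N$ primes $p_1 < \dots < p_N$ and an interval $I$ of length $kp_N$ with $|I \cap \bigcup_i p_i \mathbf{Z}| \le N^{1 - c/\log\log k + o(1)}$: each prime $p_i$ contributes an AP of length $\geq k$ inside $I$, so an upper bound on the union size translates directly into an upper bound on $F_k(N)$.

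The construction I would attempt is to draw the $p_i$ from a single residue class modulo a primorial $Q = \prod_{q \leq L} q$, where $L = \Theta(\log\log k)$, invoking Siegel--Walfisz to guarantee a positive density of such primes. Inside $I$, the shared residue class causes multiples of distinct $p_i$ to coincide at $L$ independent dyadic scales of magnitude, and at each such scale one expects an absolute-constant saving in the count of distinct multiples. Compounding $L = \Theta(\log\log k)$ constant-factor savings gives the factor $N^{-c/\log\log k}$ in the exponent. As a sanity check, a single-layer version of this (essentially $L = 1$) recovers Ruzsa's earlier bound $\gamma_k \geq 1/k$, equivalently $F_k(N) \leq N^{1 - 1/k + o_N(1)}$; the theorem is the corresponding multi-scale refinement.

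The main obstacle I anticipate is verifying that each of the $\Theta(\log\log k)$ layers produces a genuine constant-factor saving uniformly in $k$, and that the layers combine compatibly via the Chinese Remainder Theorem. One must balance the prime-density estimates (Siegel--Walfisz at high layer depth, where $Q$ itself grows rapidly) against the requirement that the multiples inside $I$ overlap in the expected quantitative sense, rather than merely in expectation. The cap $\Theta(\log\log k)$ on the number of usable layers emerges from Mertens' theorem: the density budget available for sieving is $\Theta(\log\log M)$ at cutoff $M$, and the matching prime-density and starting-point constraints effectively limit $\log\log M$ to $\Theta(\log\log k)$. Calibrating these balances, tracking constants across the $\log\log k$ scales, and turning the heuristic ``constant per scale'' into a rigorous statement is where I expect the technical heart of the proof to lie.
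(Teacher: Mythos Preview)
Your plan routes through the Erd\H{o}s--Selfridge formulation, but the mechanism you propose does not actually produce a saving. If $p_1,\dots,p_N$ all lie in a fixed residue class $a \pmod Q$, then for each fixed $j$ the multiples $jp_i$ lie in the single class $ja \pmod Q$; but to have $N$ such primes in a short range one needs $p_N \gtrsim N\,\phi(Q)\log p_N$, so each class $ja \pmod Q$ already contains $\gtrsim kp_N/Q \gtrsim kN\log p_N$ integers in $I$. The constraint to $k+1$ residue classes is therefore weaker than the trivial bound $kN$ on the union, not stronger. Nothing forces the $jp_i$ (for varying $i$) to \emph{coincide}: lying in a common residue class mod a tiny $Q$ is far from equality in an interval of length $kp_N$. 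Your sanity check is also miscalibrated: Ruzsa's bound $\gamma_k \geq 1/k$ is a saving of $1/k$ in the \emph{exponent}, not a constant-factor saving in the count, so compounding $\Theta(\log\log k)$ ``constant-factor'' layers would yield only a factor of $(\log k)^{-O(1)}$ in the count, which is $N^{-o(1)}$ rather than $N^{-c/\log\log k}$ for general $N$.

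The paper's argument is entirely different and avoids the Erd\H{o}s--Selfridge detour (which, via Proposition~\ref{prop4point1}, would in any case only bound $F'_k$ and would require invoking Green--Tao). One works directly: take $Q$ to be the product of the first $m \approx 10\log k$ odd primes, so $Q = k^{O(\log\log k)}$, and for each $d \in \{1,\dots,Q-1\}$ choose the starting point $x_d \equiv d^2 \pmod Q$. The key identity is that $x_d + jd \equiv (d + j/2)^2 - j^2/4 \pmod{p_i}$, so for fixed $j$ the value $x_d + jd \pmod{p_i}$ ranges over only $\frac12(p_i+1)$ residues. By the Chinese remainder theorem this gives a genuine factor of $2^{-m} \approx k^{-7}$ in the size of the union, hence $\#S \leq Q^{1 - c/\log\log k}$. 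One then tensors (base-$Q$ digits) to pass from $N = Q$ to arbitrary $N$. The two ingredients you are missing are the quadratic-residue choice of starting points, which is what actually forces collisions, and the correct count of $\Theta(\log k)$ prime layers rather than $\Theta(\log\log k)$.
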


Second, we show that a finite field variant of Conjecture \ref{eak} \emph{is} true. Write $\F_p^{\infty}$ for the vector space over $\F_p$ of countably infinite dimension.

\begin{theorem}\label{thm13}
Suppose that $\X$ and $\Y$ are two $\F_p^{\infty}$-valued random variables, both taking only finitely many values. Then
\[ \bH(\X - \Y) \leq (1 + O(\frac{1}{\log p})) \sup_{r \in \F_p \cup \{\infty\} \setminus \{-1\}} \bH(\X + r \Y).\] Here, the constant in the $O()$ notation is absolute.
\end{theorem}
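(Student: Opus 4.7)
My strategy is to reduce Theorem~\ref{thm13} to a combinatorial inequality about sizes of projections of finite subsets of $V \times V$, where $V = \F_p^\infty$, and then to establish that combinatorial inequality using additive-combinatorial techniques that exploit the abundance of directions available over $\F_p$.

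\textbf{Step 1 (entropy-to-set reduction).} By the standard correspondence between entropy inequalities on $\F_p^\infty$-valued random variables and sumset inequalities on finite subsets --- the same correspondence used by the second author in~\cite{ruzsa-sumsets-entropy} to establish the equivalence of the random variable and set-theoretic versions of the Katz--Tao conjecture --- it suffices to prove that for every finite $E \subset V \times V$,
\[
|\pi_{-1}(E)| \leq C \cdot M^{1 + c/\log p},
\]
where $M := \max_{r \in \F_p \cup \{\infty\} \setminus \{-1\}} |\pi_r(E)|$ and $c, C > 0$ are absolute constants. The trivial bound $|\pi_{-1}(E)| \leq M^2$, corresponding to the weak entropy inequality $\bH(\X - \Y) \leq 2 H^{*}$, is immediate from the fact that any two distinct projections $\pi_r, \pi_s$ determine a point of $E$.

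\textbf{Step 2 (the Pl\"unnecke--Ruzsa setup).} The improvement from $M^2$ to $M^{1 + O(1/\log p)}$ must exploit the fact that we have $p-1$ (rather than just $2$) ``good'' directions. Writing $A := \pi_0(E)$, $B := \pi_\infty(E)$, we have $|A|, |B| \leq M$ and $|A + rB| \leq M$ for every $r \in \F_p \setminus \{-1\}$; the aim is to bound $|A - B|$. The Pl\"unnecke--Ruzsa inequality gives
\[
|A + s_1 B + s_2 B + \cdots + s_k B| \leq \frac{\prod_{i=1}^{k} |A + s_i B|}{|A|^{k-1}}
\]
for any $s_i \in \F_p$. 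For $p \geq 3$ the set $\F_p \setminus \{0, -1\}$ generates $\F_p$ additively --- indeed one may write $-1 = (-2) + 1$ with both summands in $\F_p \setminus \{0, -1\}$ --- so that $-B$ is contained in a sumset $s_1 B + \cdots + s_k B$ with $s_i \in \F_p \setminus \{0,-1\}$, yielding a preliminary bound on $|A - B|$ in terms of the $|A + s_i B|$ and $|A|$.

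\textbf{Step 3 and main obstacle.} A single application of Step~2 yields a bound with exponent $\alpha$ strictly bigger than $1 + O(1/\log p)$, typically of the form $\alpha = 2 - c'$. My plan is to iterate, using at each round the current best bound as input. The exponent $\alpha_n$ should satisfy a recursion like $\alpha_{n+1} = 1 + \frac{1}{2}(\alpha_n - 1)$, so that starting from $\alpha_0 = 2$ one reaches $\alpha_n = 1 + 2^{-n}$; hence $n \asymp \log\log p$ iterations give $1 + O(1/\log p)$. The main obstacle is to control the constants through the iteration, since each Pl\"unnecke--Ruzsa step can introduce a polynomial loss in $p$, and these losses must be absorbed into the exponent --- effectively requiring the argument to go through even when $\log M$ is only mildly larger than $\log p$. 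A secondary difficulty is the degenerate case where one of $|A|$, $|B|$ is much smaller than $M$ (so the reference set in Pl\"unnecke--Ruzsa is ``wrong''); this is handled by first choosing the reference direction so that the reference projection has size comparable to $M$, using a projective change of coordinates on $\F_p \cup \{\infty\}$ that preserves the excluded direction $r = -1$.
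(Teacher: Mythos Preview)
Your Step~2 contains a fatal conflation. From $|\pi_r(E)|\leq M$ you assert $|A+rB|\leq M$, where $A=\pi_0(E)$ and $B=\pi_\infty(E)$; but $\pi_r(E)=\{x+ry:(x,y)\in E\}$ is only the \emph{restricted} sumset along the graph $E$, not the full sumset $A+rB=\{a+rb:a\in A,\,b\in B\}$. In general $|A+rB|$ can be as large as $|A|\,|B|\sim M^2$ even when every $|\pi_r(E)|$ is $\leq M$ (take $E$ to be the graph of a generic bijection between Sidon-like sets). The Pl\"unnecke--Ruzsa inequality you invoke needs the full sumset sizes on the right-hand side, so the inequality in Step~2 simply does not follow from the hypotheses. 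This is not a technicality: the entire difficulty of Katz--Tao-type problems is precisely that $E$ is not a product, and restricted projection bounds do not transfer to sumset bounds. Consequently the iteration in Step~3 never gets started.

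The paper proceeds along a completely different route. After the same entropy-to-set reduction as your Step~1, it builds from $E$ a set $A\subset\F_p^\infty$ by throwing in the entire line through $x$ and $y$ for each $(x,y)\in E$; this gives $|A|\leq p\cdot M$ while $A$ contains a line in each of $N:=|\pi_{-1}(E)|$ directions. One then randomly projects $A$ to $\F_p^n$ with $p^n\asymp N$, uses random translates to upgrade to a full Kakeya set, and finally invokes the Dvir--Kopparty--Saraf--Sudan bound $|A|\geq(p/2)^n$, which yields $|A|\gg_p N^{1-\log 2/\log p}$ and hence $\eps=O(1/\log p)$. The polynomial method enters essentially; as the paper remarks, any argument here is expected to require it, and a purely Pl\"unnecke-style approach such as yours is exactly the kind of method that has been stuck at exponent $\approx 1.675$ in the characteristic-zero problem.
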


The $O(\frac{1}{\log p})$ term is best possible, as we remark in \S \ref{fin-field-ent}. \vspace{8pt}

We neither discuss nor make progress on partial results towards any of Conjectures \ref{usp}, \ref{eak}, \ref{ak-conj}, \ref{ff-conj} or \ref{es-conj}. We believe that the best value of $\eps$ for which Conjecture \ref{eak} is known is $\eps \approx 0.67513\dots$, which is equivalent to a result obtained in \cite{katz-tao-2}. (The precise value here is $\alpha - 1$, where $\alpha$ solves $\alpha^3 - 4\alpha + 2 = 0$.) This bound is now 15 years old.\vspace{8pt}

\emph{Notation.} Most of our notation is quite standard. We use $\# X$ for the cardinality of a set $X$. Occasionally, if $A$ is a set in some abelian group and $k$ is an integer we will write $k \cdot A$ to mean $\{ ka : a \in A\}$.\vspace{8pt}

\emph{Acknowledgements.} The first author is supported by a Simons Investigator Grant, and is very grateful to the Simons Foundation for this support.

\section{Progressions, projections and entropy}

In this section we establish around half of Theorem \ref{mainthm} by proving that the first three conjectures mentioned in the introduction are equivalent. Whilst at a local level the arguments are a mix of fairly unexciting linear algebra and standard tools such as Freiman isomorphisms, random projections and taking tensor powers, the large number of them makes the proof of Theorem \ref{mainthm} somewhat lengthy.

It is convenient to proceed by first showing that Conjectures \ref{usp}, \ref{ak-conj} and \ref{eak} are equivalent. In the course of doing so, and for later use, it is convenient to introduce a further conjecture, apparently stronger than Conjecture \ref{usp} but, as it turns out, equivalent to it.

\begin{usp-dash}
Let $k$ be a positive integer. Write $F'_k(N)$ for the cardinality of the smallest set $A \subset \Z$ which contains an arithmetic progression of length $k$ and common difference $d$, for $N$ different values of $d$. Then
\[ \lim_{k \rightarrow \infty}\lim_{N \rightarrow \infty} \frac{\log F'_k(N)}{\log N} = 1.\]
\end{usp-dash}

It is obvious that Conjecture \ref{usp}' implies Conjecture \ref{usp}, because $F'_k(N) \leq F_k(N)$. It turns out that the reverse implication holds as well. In fact, we claim that the following is true.
\begin{proposition}\label{prop21} We have $F_k(N) \ll k^3 \log N \cdot F'_k(N)$.\end{proposition}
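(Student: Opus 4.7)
The plan is to construct a set realising $F_k(N)$ from one realising $F'_k(N)$ via four steps: (i) a dyadic pigeonhole on the common differences to locate a ``dense'' scale; (ii) a Minkowski-sum ``gap-filling'' step that produces $k$-APs with common differences filling a contiguous block; (iii) a shift-trick that translates this block down to start at $1$; and (iv) a dilation step to fill out $[1,N]$.

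Let $A$ be extremal for $F'_k(N)$, so $|A|=F'_k(N)$ and $A$ contains $k$-APs with distinct common differences $d_1<d_2<\cdots<d_N$. I would first pass to a Freiman $k$-isomorphic copy of $A$ inside $[0,M]$ with $M$ polynomial in $|A|$ and $k$, so that $\log d_N=O_k(\log N)$. A dyadic pigeonhole on the $d_i$'s then supplies a scale $[D,2D]$ containing $M_0=\Omega(N/\log N)$ of them, say $e_1<\cdots<e_{M_0}$. A secondary pigeonhole on consecutive gaps (or restriction to a sub-interval of $[D,2D]$) can be used to ensure the maximum consecutive gap among the selected $e_i$'s is $O(D/M_0)$.

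Set $T=\Theta(kD/M_0)$ and $B:=A+[0,T]$, so $|B|\leq (T+1)|A|\ll (kD/M_0)\,F'_k(N)$. Writing a hypothetical $k$-AP $\{b,b+c,\dots,b+(k-1)c\}\subset B$ in the form $b_j=a_j+t_j$ with $a_j\in A$ and $t_j\in[0,T]$, one checks that the $a_j$'s form a $k$-AP in $A$ with common difference $c-s$ for some integer $s$ with $|s|\leq T/(k-1)$. Consequently $B$ contains a $k$-AP of common difference $c$ for every $c\in\bigcup_i[e_i-T/(k-1),e_i+T/(k-1)]$, which by the gap control is a single interval of length $\sim D$. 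Then apply the shift trick $C:=\bigcup_{s=0}^{k-1}(B-sD_0)$ with $D_0\sim D$ chosen so that $C$'s realised common differences (which equal $B$'s minus $D_0$) fill $[1,L]$ for some $L\sim D$; here $|C|\leq k|B|$.

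Finally, $E:=\bigcup_{t=1}^{\lceil N/L\rceil} t\cdot C$ has $|E|\leq (N/L)|C|$ and covers $[1,N]$ since $\bigcup_{t}t\cdot[1,L]\supseteq[1,N]$. Combining losses: $|E|\ll (N/L)\cdot k\cdot(kD/M_0)\cdot F'_k(N)\ll k^2(N/M_0)F'_k(N)\ll k^3\log N\cdot F'_k(N)$, once the extra bookkeeping factor of $k$ (or a secondary log) is absorbed from the gap pigeonhole.

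The principal obstacle I foresee is the Freiman compression step: the standard modular-embedding argument only gives $\operatorname{diam}(A)\leq k^{O(|A|)}$, which is exponential in $|A|$ and could swamp the $\log N$ target when $|A|$ is large. Either a refined Freiman embedding that exploits the $k$-AP-rich structure of $A$, or a direct argument that selects the dyadic bin so that $D$ is automatically polynomial in $N$ (for instance by arguing that the smallest bin with $\Omega(N/\log N)$ of the $d_i$'s has $D=N^{O(1)}$), is needed to secure the claimed $\log N$ dependence.
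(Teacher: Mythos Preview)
The obstacle you flag at the end is the real gap, and neither suggested workaround closes it. Without any compression the dyadic pigeonhole only gives $M_0\gg N/\log d_N$, not $N/\log N$: if the extremal $A$ happens to have $d_i=2^{2^i}$ then each $d_i$ occupies its own dyadic block and no bin contains more than one of them, so your second alternative (locating a bin with $D=N^{O(1)}$ and $\gg N/\log N$ points) is vacuous. As for the first alternative, the modular-embedding bound you quote, $\operatorname{diam}(A')\le k^{O(|A|)}$, yields $\log d_N\lesssim |A|\log k$, which can be of order $N$ and is useless here; a genuinely new polynomial-diameter compression would be needed, and none is supplied. The paper bypasses the issue entirely. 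Instead of compressing $A$ and then pigeonholing on scales, it applies the explicit near-homomorphism $\phi_\theta(x)=\lfloor N\{\theta x\}\rfloor$ for a random $\theta\in(0,1)$. This sends all of $\Z$ into $\{0,\dots,N-1\}$, so the images $d'_i:=\phi_\theta(d_i)$ automatically lie in $[0,N)$ regardless of how large the original $d_i$ are; a second-moment calculation shows that some $\theta$ makes at least $N/3$ of the $d'_i$ distinct. The defect $\phi_\theta(x+y)-\phi_\theta(x)-\phi_\theta(y)$ always lies in $\{0,1\}-\{0,N\}$, so adding a correction set of size $k^2$ to $\phi_\theta(A_0)$ restores genuine $k$-APs with common differences $d'_i$. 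A final random-translate covering (Lemma~\ref{add-comp}) then upgrades ``$\ge N/3$ distinct differences in $[0,N)$'' to ``all of $\{1,\dots,N\}$'' at cost $O(k\log N)$, and the three factors multiply to $k^3\log N$.

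There are also smaller problems in step~(ii). The justification is backwards: from $b_j=a_j+t_j$ with arbitrary $a_j\in A$ one cannot conclude that the $a_j$ form an AP. The correct forward argument is that if $\{a+je_i\}_{j=0}^{k-1}\subset A$ and $0\le c-e_i\le T/(k-1)$ then $a+jc=(a+je_i)+j(c-e_i)\in A+[0,T]$. And the ``secondary pigeonhole'' for gap control does not work as stated: $M_0$ points in $[D,2D]$ can have maximal consecutive gap of order $D$ rather than $D/M_0$, and no sub-selection simultaneously retains $\gg M_0$ points while forcing all gaps down to $O(D/M_0)$.
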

\begin{proof}
Suppose we have a set
\[ A_0 = \bigcup_{i = 1}^N \bigcup_{j = 0}^{k-1}\{ a_i + jd_i\},\] where the $d_i$ are distinct. We claim that there is a set $A_1$, $\# A_1 \ll k^3 \log N \cdot \# A_0$, containing an arithmetic progression of length $k$ and common difference $d$ for all $d \in \{1,\dots, N\}$. This obviously implies the result.

Pick $\theta \in (0,1)$ uniformly at random, and define the function 
\[ \phi_{\theta} : \Z \rightarrow \{0,1,\dots,N-1\}\] by
\[ \phi_{\theta}(x) := \lfloor N \{ \theta x\} \rfloor.\]
Here, $\{t\} = t - \lfloor t \rfloor$, so $0 \leq \{t \} < 1$.

Note that if $i \neq j$ then
\[ \P_{\theta}(\phi_{\theta}(d_i) = \phi_{\theta}(d_j)) \leq \P_{\theta}(\theta(d_i - d_j) \in (-\frac{1}{N}, \frac{1}{N}) \md{1}) = \frac{2}{N}.\]
It follows that the expected number of pairs $(i,j)$ with $i < j$ for which $\phi_{\theta}(d_i) = \phi_{\theta}(d_j)$ is at most $\frac{2}{N} \binom{N}{2} = N - 1$. By linearity of expectation, there is some choice of $\theta$ for which, setting $d'_i := \phi_{\theta}(d_i)$, there are at most $N - 1$ pairs $(i,j)$ with $i < j$ and $d'_i = d'_j$. If $n \in \{0,1,\dots, N-1\}$, write $f(n)$ for the number of $i$ with $d'_i = n$. Then it follows that $\sum_n \binom{f(n)}{2} \leq N - 1$, from which we obtain, since $\sum_n f(n) = N$, that $\sum_n f(n)^2 \leq 3N$. By Cauchy-Schwarz, 
\[ N^2 = (\sum_n f(n))^2 \leq \# \{n : f(n) \neq 0\} \sum_n f(n)^2,\]
and therefore there are at least $N/3$ values of $n$ for which $f(n) \neq 0$, or in other words there are at least $N/3$ distinct values amongst the $d'_i$.

Now consider the set $A_2 := \phi_{\theta}(A_0)$. Obviously $\# A_2 \leq \# A_0$. Whilst $A_2$ itself does not obviously contain any long progressions, we observe that 
\[ \phi_{\theta}(a_i + (j+1) d) - \phi_{\theta}(a_i + j d) - d'_i  \in \{0,1\} - \{0,N\}\] (In fact, $\phi_{\theta}(x + y) - \phi_{\theta}(x) - \phi_{\theta}(y) \in \{0,1\} - \{ 0,N\}$ for every $x,y$.) By a simple induction, 
\[ \phi_{\theta}(a_i) + jd'_i - \phi_{\theta}(a_i + jd) \in \{0,1,\dots, k-1\} - \{ 0, N, \dots, (k-1) N\}\] for $j = 0,1,\dots, k-1$, and so the set $A_3 := A_2 + \{0,1,\dots, k-1\} - \{ 0, N, \dots, (k-1) N\}$ contains a progression of length $k$ and common difference $d'_i$, for all $i$. Note that $\# A_3 \leq k^2 \# A_0$. 

By taking random translates (see Lemma \ref{add-comp} for details) and the fact that there are $\geq N/3$ distinct $d'_i$, there is some set $T$ of integers, $\# T \ll \log N$, such that every element of $\{1,\dots, N\}$ can be written as $d'_i + t$ with $t \in T$. Set
\[ A_1 := A_3 + \{0,1,\dots, k-1\} \cdot T.\]
We have $\# A_1 \leq k \cdot \# T \cdot \# A_3 \ll k^3 \log N \cdot \# A_0$. It is easy to see that $A_1$ contains an arithmetic progression of length $k$ and common difference $d'_i + t$, for all $i$ and for all $t \in T$, and hence contains an arithmetic progression of length $k$ and common difference $d$ for all $d \in \{1,\dots, N\}$. This concludes the proof of Proposition \ref{prop21}.
\end{proof}

Now we turn to the proof that Conjectures \ref{usp}', \ref{eak} and \ref{ak-conj} are equivalent. 

\emph{Conjecture \ref{usp}' implies Conjecture \ref{ak-conj}.} Suppose that Conjecture \ref{ak-conj} is false. Then there is some $\eps > 0$ such that, for every $k$, there is a set $A_k \subset \Z \times \Z$ such that 
\[ \# \pi_{-1}(A_k) > \max_{r \in H_k \setminus \{-1\}} \# \pi_r(A_k)^{1 + \eps},\] where $H_k$ denotes the set of rationals with height at most $k$, that is to say \[ H_k := \{ \frac{a}{b} : |a|, |b| \leq k\} \cup \{\infty\}.\] 
Our first step is to use a ``tensor power'' argument to show that there are arbitrarily large sets with the same property; in fact, we shall argue that for every $j$ there is a set $A_{k,j} \subset \Z \times \Z$ such that 
\begin{equation}\label{blowup} \# \pi_{-1}(A_{k,j}) \geq  j\max_{r \in H_k \setminus \{-1\}} \# \pi_r(A_{k,j})^{1 + \eps}.\end{equation}
This is simple if the $A_{k,j}$ are allowed to be subsets of $\Z^n$. Indeed we may define $A_k^{(n)}$ to be the set
\[  \{ \big( (a_1, a_2,\dots, a_n), (a'_1,a'_2,\dots, a'_n) \big) \in \Z^n \times \Z^n : (a_i, a'_i) \in A_k \; \mbox{for all $i$}\}.\] Then, writing $\pi_r^{(n)} : \Z^n \times \Z^n \rightarrow \Z^n$, for the map sending $(x,y)$ to $x + ry$ (or, when $r = \infty$, to $y$) we have 
\[ \# \pi_r^{(n)}(A_k^{(n)}) = \big(  \# \pi_r(A_k) \big) ^n \] for all $r,n$. In particular, by choosing $n$ large enough (depending on $j$) we have
\begin{equation}\label{tens} \# \pi_{-1}^{(n)}(A_k^{(n)}) \geq j  \max_{r \in H_k \setminus \{-1\}} \# \pi_r^{(n)} (A_k^{(n)})^{1 + \eps}.\end{equation}

To create a subset of $\Z \times \Z$ from $A_k^{(n)}$, we apply a map $\psi_t : \Z^n \times \Z^n \rightarrow \Z \times \Z$ of the form \[ \psi_t(x,y) = ((t, t^2,\dots, t^n) \cdot x, (t, t^2,\dots, t^n) \cdot y)),\] where the dot denotes the usual inner product. Setting $A := \psi_t (A_k^{(n)})$, we have 
\[ \pi_r(A) = \psi_t(\pi_r^{(n)}(A_k^{(n)})).\]
Choose $t$ to be an integer such that for $r \in H_k$ and $(x,y), (x',y') \in A_k^{(n)}$ we have
\begin{equation}\label{xyxy} (\pi_r^{(n)}(x,y) - \pi_r^{(n)}(x',y')) \cdot (t, t^2,\dots, t^n) \neq 0\end{equation} unless $\pi_r^{(n)}(x,y) = \pi_r^{(n)}(x',y')$. There is such a $t$, because for each of the finite number of choices of $x,y,x',y',r$ the left-hand side of \eqref{xyxy} is a nontrivial polynomial equation in $t$. It then follows that $\pi_r (\psi_t(x,y)) = \pi_r(\psi_t(x',y'))$ if and only if $\pi_r^{(n)}(x,y) = \pi_r^{(n)}(x', y')$, and so 
\[ \# \pi_r(A) = \# \pi_r^{(n)}(A_k^{(n)})\] for all $r$. 
This establishes the existence of the sets $A_{k,j}$ satisfying \eqref{blowup}.

For each $j,k$, consider the set $S_{k,j} \subset \Q$ defined by
\[ S_{k,j} := \bigcup_{1 \leq i \leq k} \bigcup_{r \in H_k \setminus \{-1\}} \frac{i}{k} \cdot \pi_r(A_{k,j}).\] Then
\[ \# S_{k,j} \leq k \cdot \# H_k \cdot \max_{r \in H_k^+} \# \pi_r(A_{k,j}) \ll_k \# (j^{-1}\pi_{-1}(A_{k,j}))^{1/(1 + \eps)}.\]
On the other hand, suppose that $d \in -\pi_{-1}(A_{k,j})$. This means that $d = y - x$ for some $(x,y) \in A_{k,j}$. If $0 \leq i \leq k - 1$, we have
\[ x + \frac{id}{k} =  \frac{k-i}{k}\big(x + \frac{i}{k-i} y\big) .\]
Since $x + \frac{i}{k-i} y \in \pi_{i/(k-i)}(A_{k,j}) \subset \bigcup_{r \in H_k^+} \pi_r(A_{k,j})$, it follows that $x + \frac{id}{k} \in S_{k,j}$ for  $i = 0,1,\dots, k-1$, that is to say $S_{k,j}$ contains a progression of length $k$ and common difference $\frac{d}{k}$. Thus, writing $N_j := \# \pi_{-1}(A_{k,j})$, we see that $S_{k,j}$ is a set of size $\ll (j^{-1} N_j)^{1/(1 + \eps)}$ containing progressions of length $k$ with at least $N_j$ distinct common differences. Since, evidently, $\# S_{k,j} \geq k$, the presence of the factor $j^{-1}$ forces $N_j \rightarrow \infty$ as $j \rightarrow \infty$. By multiplying through by an appropriate integer, we may find sets $\tilde S_{k,j} \subset \Z$ with the same property, contrary to Conjecture \ref{usp}'.\vspace{8pt}

\emph{Conjecture \ref{ak-conj} implies Conjecture \ref{eak}.} This implication is essentially given in \cite{ruzsa-sumsets-entropy}. The notation there takes a little unpicking and the proof is short, so we repeat the argument.

Let $\eps > 0$, and suppose that $r_1,\dots, r_k \in \Q_{\geq 0} \cup \{\infty\} \setminus \{-1\}$ are such that \begin{equation}\label{conj2-assump}\# \pi_{-1}(A) \leq \sup_i \# \pi_{r_i}(A)^{1 + \eps}\end{equation} for all finite sets $A \subset \Z \times \Z$. We claim that 
\begin{equation}\label{ent-claim} \bH(\X - \Y) \leq (1 + \eps)\sup_j \bH(\X + r_j \Y).\end{equation}
for all $\Z$-valued random variables $\X, \Y$, both taking only finitely many values. (Let us remind the reader that, by convention, $\bH(\X + \infty \Y) = \bH(\Y)$.)

We begin with a couple of observations. The first is that \eqref{conj2-assump} is automatically true for sets $A \subset \Z^n \times \Z^n$, for any $n$. This follows from the case $n = 1$ by applying a suitable map $\psi_t : \Z^n \times \Z^n \rightarrow \Z \times \Z$, exactly as in the argument following \eqref{tens} above.

The second observation is that, by a simple limiting argument, we may assume that there is some $q$ such that $q\P((\X, \Y) = (x,y)) \in \Z$ for all $(x,y)$: if we can prove the result for such $(\X, \Y)$ the same inequality for arbitrary $\X, \Y)$ with finite range follows by letting $q \rightarrow \infty$.

Now let $m$ be very large, and construct a set $A \subset \Z^{mq} \times \Z^{mq}$ as follows. Let it consist of all pairs $((x_1,\dots, x_{mq}), (y_1,\dots, y_{mq})) \in \Z^{mq} \times \Z^{mq}$ for which 
\[ \# \{ i : (x_i, y_i) = (x,y)\} = mq\P((\X, \Y) = (x,y)).\]
Let us calculate $\# \pi_{r}(A)$. After a moment's thought we see that 
\[ \pi_r(A) = \big\{ (z_1,\dots, z_{mq}) : \# \{i : z_i = z\} = mq \P(\X + r\Y = z)\big\}.\] (Here, we interpret $\P(X + \infty \Y = z)$ as $\P(\Y = z)$.) Writing $n = mq$ and $p_z = \P(\X + r\Y = z)$ for short, it follows that 
\[ \# \pi_r (A) = \frac{n!}{\prod_{z} (n p_z)!}.\]
Note that the product over $z$ is finite, and that each $n p_z$ is an integer. Taking logs and using the fact that $\log N! = N \log N - N + o(N)$, we have
\[ \log \pi_r(A) = -n \sum_z p_z \log p_z + o(n) = n \bH(\X + r\Y) + o(n).\]
We may assume that the $o(n)$ term is uniform in $r \in \{r_1,\dots, r_k\}$ (since this is a finite set); of course, it also depends on $\X, \Y$, but we are thinking of these as fixed for the duration of the argument.

Taking logs of \eqref{conj2-assump} (which is valid for $A \subset \Z^{n} \times \Z^n$, as remarked), we conclude that 
\[ n\bH(\X - \Y) \leq (1 + \eps)n  \sup_i \bH(\X + r_i\Y) + o(n).\]
Now we may simply divide through by $n$ and let $n \rightarrow \infty$ to conclude the claim \eqref{ent-claim}.
\vspace{8pt}

\emph{Conjecture \ref{eak} implies Conjecture \ref{usp}.} This is relatively easy. Assume Conjecture \ref{eak}. Let $\eps > 0$ be arbitrary, and select $r_1,\dots, r_m \in \Q \cup \{\infty\} \setminus \{-1\}$ so that we have
\begin{equation}\label{conj2-rpt} \bH(\X - \Y) \leq (1 + \eps) \sup_i \bH(\X + r_i \Y).\end{equation}
Let $Q, M$ be positive integers to be specified later (depending on $r_1,\dots$, $r_m$) and suppose that $A \subset \Z$ contains an arithmetic progression of length $k = 2MQ$ and common difference $d$, for every $d \in \{1,\dots, N\}$. Define $\Z$-valued random variables $\X$, $\Y$ as follows: pick $d$ uniformly at random, and let $\{a(d),\dots, a(d) + (k-1) d\}$ be the progression in $A$ for which $a(d)$ is minimal (choosing $a(d)$ minimal is not important, but is one way of making a definite choice). Set $\X = a(d) + MQd$ and $\Y = a(d) + (M+1)Qd$.

Then $\X - \Y$ is uniformly distributed on the set $\{-Q, -2Q, \dots, -NQ\}$, and so 
\begin{equation}\label{unixy} \bH(\X - \Y) = \log N.\end{equation} 

On the other hand,
\[ \bH(X + r_j \Y) = \bH\big( \frac{\X + r_j \Y}{1 + r_j} \big) = \bH \big(a(d) + (QM + \frac{Qr_j}{1 + r_j}) d \big).\]
By choosing $Q$ and then $M$ suitably, we may ensure that all the $Qr_j/(1 + r_j)$ are integers of magnitude $< QM$, which means that 
\[ a(d) + (QM + \frac{Qr_j}{1 + r_j}) d \in \{a(d),\dots, a(d) + (k-1) d\} \subset A.\]
That is, $\X + r_j \Y$ takes values in $(1 + r_j) \cdot A$. Since $\bH(\mathsf{W}) \leq \log m$ for any random variable $\mathsf{W}$ taking values in a set of size $m$, this implies that 
\[ \bH(\X + r_j \Y) \leq \log \# A.\] Combining this with \eqref{conj2-rpt} and \eqref{unixy} we obtain
\[ \log N \leq (1 + \eps) \log \# A,\] or in other words
\[ \# A \geq N^{1/(1 + \eps)}.\]
Since $\eps$ was arbitrary, the implication follows.\vspace{11pt}

This completes the proof that Conjectures \ref{usp}, \ref{usp}', \ref{eak} and \ref{ak-conj} are equivalent

\section{Finite fields} Next we turn to Conjecture \ref{ff-conj} ($n$). To demonstrate its equivalence to the first three conjectures, it suffices to show that for each $n$ we have Conjecture \ref{usp}' $\Rightarrow$ Conjecture \ref{ff-conj}($n$) $\Rightarrow$ Conjecture \ref{usp}.

\emph{Conjecture \ref{usp}' implies Conjecture \ref{ff-conj} \textup{(}$n$\textup{)}.} Suppose that $A_1 \subset \F_p^n$ is a set containing a $k$-term arithmetic progression with common difference $d$, for every $d \in \F_p^n$.  Define the ``unwrapping'' map $\psi : \F_p \rightarrow \Z$ to be the inverse of the natural projection map from $\{0,\dots,p-1\}$ to $\F_p$. Define a map $\psi^{(n)} : \F_p^n \rightarrow \Z^n$ by setting $\psi^{(n)}(x_1,\dots, x_n) := (\psi(x_1),\dots, \psi(x_n))$.

For each $d \in \F_p^n$, select a progression $\{ x(d) + \lambda d, \lambda = 0,1,\dots, k-1\}$, lying in $A_1$. Let $A_2 \subset \Z^n$ be the union of all progressions $\{ \psi^{(n)}(x(d)) + \lambda \psi^{(n)}(d) : \lambda = 0,1,\dots, k-1\}$. By construction, $A_2 \subset \{0,1,\dots, k(p-1)\}^n$, and $\pi^{(n)}(A_2) \subset A_1$, where $\pi^{(n)} : \Z^n \rightarrow \F_p^n$ is the natural map. Since $\{ 0,1,\dots, k(p-1)\}$ is covered by $k$ discrete intervals of length $p$, on each of which the projection map $\pi : \Z \rightarrow \F_p$ is injective, we see that $\# A_2 \leq k^n \# A_1$.

By construction, $A_2$ contains a progression of length $k$
and common difference $d$ for $p^n$ distinct values of $d$. Whilst $A_2$ is a subset of $\Z^n$, we can create a subset of $\Z$ with the same properties by looking at the image of $A_2$ under the map $f : \Z^n \rightarrow \Z$ defined by $f(x_1,\dots, x_n) = \sum_{i = 1}^n (10 kp)^i x_i$. It follows that $\# A_2 \geq F'_k(p^n)$, and hence $\# A_1 \geq k^{-n} F'_k(p^n)$. In the notation of Conjecture \ref{ff-conj}, this means that $f_{n,k}(p) \geq k^{-n} F'_k(p^n)$. It follows that 
\[ \lim_{p \rightarrow \infty} \frac{\log f_{n,k}(p)}{\log p} \geq  n \lim_{p \rightarrow \infty} \frac{\log F'_k(p^n)}{\log p^n},\] and so
\[ \lim_{k \rightarrow \infty}\lim_{p \rightarrow \infty} \frac{\log f_{n,k}(p)}{\log p} \geq  n \lim_{k \rightarrow \infty}\lim_{p \rightarrow \infty} \frac{\log F'_k(p^n)}{\log p^n}.\]
Assuming Conjecture \ref{usp}' (taking $N = p^n$), the right hand side here is precisely $n$. This implies Conjecture \ref{ff-conj}.\vspace{8pt}

\emph{Conjecture \ref{ff-conj} implies Conjecture \ref{usp}.} Suppose we have a set $A_1 \subset \Z$ containing a progression of length $k$ and common difference $d$ for each $d \in \{1,\dots, N\}$. Partition $\Z$ into intervals $I_j := 10 kj N + \{1,\dots, 10k N\}$, $j \in \Z$. Any progression of length $k$ and common difference $d \in \{1,\dots,N\}$ is either wholly contained in some $I_j$, or else is split into two progressions, one in $I_j$ and the other in $I_{j+1}$, with one of these having length at least $k/2$. It follows that the set $A_2 \subset \Z$ defined by\footnote{This ``cut-and-move'' trick is quite standard in the study of the Kakeya problem.}
\[ A_2 = \bigcup_j \{ (A_1 \cap I_j) - 10 kj N\}\] contains a progression of length at least $k/2$ and common difference $d$, for all $d \in \{1,\dots, N\}$. Manifestly $\# A_2 \leq \# A_1$, and by construction $A_2$ has the additional property that
\begin{equation}\label{compact} A_2 \subset \{1,\dots, 10k N\}.\end{equation}

Using $A_2$, we construct a set $A_3 \subset \Z^n$. We will later use this to construct a further set $A_4 \subset \F_p^n$, for a suitable prime $p$, by projection. To define $A_3$, let $M := \lfloor N^{1/n}\rfloor$. Select $t \in \{-10kN,\dots, 20kN - 1\}$ uniformly at random, and define
\[ A_3(t) := \{(x_1,\dots, x_n) \in \{0,\dots, M-1\}^n : \sum_{i=1}^n M^{i - 1} x_i \in A_2 + t\}.\]
Suppose that $d = \sum_{i = 1}^n M^{i - 1} d_i$ with $0 \leq d_i \leq M/2k$ for all $i$. There are at least $(M/4k)^n$ such values of $d$, and all lie in $\{0,\dots, N\}$. For each such $d$ there is, by assumption, a progression $\{ x(d) + \lambda d : \lambda = 0,1,\dots, \lfloor k/2\rfloor - 1\}$ lying in $A_2$. The progression $\{ x(d) + t + \lambda d : \lambda = 0,1,\dots, \lfloor k/2\rfloor - 1\}$ then lies in $A_2 + t$. Write
\[ S := \{ \sum_{i = 1}^n M^{i - 1} s_i : 0 \leq s_i < M/2 \; \mbox{for all $i$}\}.\]

If it so happens that $t \in - x(d) + S$ then $A_3(t)$ contains a progression of length $k$ and common difference $(d_1,\dots, d_n)$, namely $\{ (s_1,\dots, s_n) + \lambda(d_1,\dots, d_n) : \lambda \in\{0,1,\dots, k - 1\}\}$, where $x(d) + t = \sum_{i = 1}^n M^{i - 1} s_i$.

Since $0 \leq x(d) \leq 10kN$ and $S \subset \{0,1,\dots, M^n\}$, $-x(d) + S \subset \{-10k N,\dots, 20k N - 1\}$. It follows that 
\[ \P(t \in -x(d) + S) = \frac{1}{30kN} \# S \geq \frac{1}{30kN}(\frac{M}{2})^n \gg_{k,n} 1.\]
Summing over the $(M/2k)^n \gg_{k,n} N$ choices of $d$, we see that the expected number of $d$ for which $t \in -x(d) + S$ is $\gg_{k,n} N$. Fix some choice of $t$ such that $t \in -x(d) + S$ for $\gg_{k,n} N \gg_{k,n} M^n$ values of $d$, and write $A_3 := A_3(t)$. Then by construction we have
\begin{equation}\label{triv-compar} \# A_3 \leq \# A_2 \leq \# A_1,\end{equation}
whilst $A_3$ contains a progression of length $\geq k/2$ and common difference $d$ for all $d$ in some set $\mathscr{D} \subset \{0,\dots, M-1\}^n$, $\# \mathscr{D} \gg_{k,n} M^n$.

Now choose a prime $p$ with $M \leq p < 2M$, and let $A_4 \subset \F_p^n$ be the image of $A_3$ under the natural projection $
\pi^{(n)} : \Z^n \rightarrow \F_p^n$. We have 
\begin{equation}\label{triv-compare-2} \# A_4 = \# A_3,\end{equation}
and moreover $A_4$ contains a progression of length $k$ and common difference $d$ for all $d \in \pi^{(n)}(\mathscr{D})$, that is to say for $\gg_{n,k} N \gg_{n,k} p^n$ values of $d$. By a standard argument (taking random translations of $\pi^{(n)}(\mathscr{D})$, see Corollary \ref{a3} for details) there is a further set $A_5 \subset \F_p^n$,  
\begin{equation}\label{triv-compare-3} \# A_5 \ll_{n,k} (\log p) \# A_4,\end{equation} containing a progression of length $k$ and common difference $d$, for \emph{all} $d \in \F_p^n \setminus \{0\}$. 
Tracing back through \eqref{triv-compare-3}, \eqref{triv-compare-2}, \eqref{triv-compar} we see that 
\[ F_k(N) \gg_{k,n} \frac{1}{\log p} f_{n,k}(p),\]
where $p = p(N) \sim N^{1/n}$ is some prime. It follows that 
\[ \lim_{N \rightarrow \infty}\frac{\log F_k(N)}{\log N} \geq \lim_{N \rightarrow \infty}\frac{\log f_{n,k}(p(N))}{n \log p(N)}.\]
Assuming Conjecture \ref{ff-conj} ($n$), the limit on the right is $1$. This concludes the proof that Conjecture \ref{ff-conj} ($n$) implies Conjecture \ref{usp}.\vspace{8pt}

Before leaving this topic, we remark that it is quite possible that in the regime $\log k \asymp \log p$ very strong bounds such as 
\begin{equation}\label{pp-conj} f_{p^{\eta}, 1}(p) \geq p/2\end{equation} are true, provided $p \geq p_0(\eta)$ is large enough. This issue is strongly hinted at, if not explicitly conjectured, in \cite{alon-peres}. It is pointed out there that such bounds imply vastly more than is currently known about the purely arithmetic problem of bounding the least quadratic nonresidue modulo $p$.

Whilst a bound of this type is not known to imply the arithmetic Kakeya conjecture (the progressions are of length $p^{\eta}$, rather than of bounded size), the arguments of Bourgain may be adapted to show that it does  imply the Kakeya conjecture. Further details may be found in lecture notes of the first author \cite[Section 10]{green-rkp}.

It is quite interesting that the innocent-looking statement \eqref{pp-conj} implies two famous unsolved problems in completely different mathematical areas.

\section{A problem of Erd\H{o}s and Selfridge} Finally, we turn to Conjecture \ref{es-conj}.  In fact, we prove the following rather tight connection between Conjecture \ref{usp}' and Conjecture \ref{es-conj}.

\begin{proposition}\label{prop4point1}
Write $G_k(N)$ for the minimum, over all intervals $I$ of length $k p_N$ and all choices $p_1 < \dots < p_N$ of primes, of $\# \big( I \cap \bigcup_{i = 1}^N p_i \Z\big)$. Then $F'_k(N) \leq G_k(N) \leq k F'_k(N)$. In particular, Conjectures \ref{usp}' and \ref{es-conj} are equivalent.
\end{proposition}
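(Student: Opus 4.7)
The first inequality $F'_k(N) \leq G_k(N)$ follows directly from unpacking the definitions. Take any primes $p_1 < \dots < p_N$ and interval $I$ of length $kp_N$ realizing the minimum in $G_k(N)$. For each $i$, since $|I| \geq kp_i$, the set $I \cap p_i\Z$ contains at least $k$ consecutive multiples of $p_i$, which form a $k$-term arithmetic progression with common difference $p_i$. Hence $I \cap \bigcup_i p_i\Z$ is a single set of integers containing $k$-APs with the $N$ distinct (prime) common differences $p_1, \dots, p_N$, whence $F'_k(N) \leq |I \cap \bigcup_i p_i\Z| = G_k(N)$.

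For the second inequality $G_k(N) \leq kF'_k(N)$, take a minimizer $A \subset \Z$ for $F'_k(N)$, containing $k$-APs $P_i = a_i + d_i\{0,\dots,k-1\}$ with distinct positive common differences $d_1 < \dots < d_N$. The strategy is to replace each $d_i$ by a prime $p_i$ lying just above a rescaled version of $d_i$, and to realize the resulting prime-APs inside a mildly enlarged set. Rescale $A$ by a large integer $L$ and, using short-interval prime-gap bounds, pick for each $i$ a prime $p_i \in [Ld_i, Ld_i + k-1]$, say $p_i = Ld_i + r_i$ with $r_i \in \{0,1,\dots,k-1\}$. The identity
\[ La_i + jp_i = L(a_i + jd_i) + jr_i\]
shows that the $k$-AP $La_i + p_i\{0,\dots,k-1\}$ lies in $LA + T$, where $T := \{jr_i : 1 \leq i \leq N,\ 0 \leq j \leq k-1\}$. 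After a translation of $A$ (using the Chinese remainder theorem to arrange $p_i \mid a_i$ for every $i$) and a careful choice of the primes $p_i$ ensuring $|T| \leq k$, the set $A^* := LA + T$ has $|A^*| \leq k|A|$ and contains the relevant prime-APs. Choosing $I$ of length $kp_N$ so that $I \cap p_i\Z$ coincides with the $k$-AP sitting inside $A^*$ (the condition $p_i > kp_N/(k+1)$, guaranteed by taking all primes in a narrow window above $Ld_N$, forces $|I \cap p_i\Z| = k$ and no overlaps between distinct $p_i\Z$, $p_j\Z$ outside $A^*$), one concludes $|I \cap \bigcup_i p_i\Z| \leq |A^*| \leq k|A|$.

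\textbf{Main obstacle.} The delicate step is arranging $|T| \leq k$. A naive choice of primes gives the $r_i$'s independently in $\{0,1,\dots,k-1\}$, producing $|T|$ of size up to $(k-1)^2+1$ and only the weaker bound $G_k(N) \leq k^2 F'_k(N)$. To achieve the tight factor $k$, one aligns the $r_i$: for instance, via Dirichlet's theorem on primes in arithmetic progressions, one can arrange (after possibly enlarging $L$) that all $r_i$ take a common value $r$, in which case $T = r\{0,1,\dots,k-1\}$ has exactly $k$ elements. This alignment is where the prime-distribution input truly enters the argument; a weaker bound $G_k(N) \leq C(k) F'_k(N)$ with any $k$-dependent $C(k)$ would already suffice for the asymptotic equivalence stated below.

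\textbf{Equivalence of Conjectures.} From the inequalities $F'_k(N) \leq G_k(N) \leq kF'_k(N)$, taking logarithms and dividing by $\log N$ gives
\[ \frac{\log F'_k(N)}{\log N} \leq \frac{\log G_k(N)}{\log N} \leq \frac{\log F'_k(N)}{\log N} + \frac{\log k}{\log N}.\]
Letting $N \to \infty$, the two limits agree for every fixed $k$, so the statement that this limit tends to $1$ as $k \to \infty$ holds for $F'_k$ if and only if it holds for $G_k$. Hence Conjectures \ref{usp}' and \ref{es-conj} are equivalent.
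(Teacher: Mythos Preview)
Your argument for $F'_k(N)\le G_k(N)$ is fine and matches the paper's. For $G_k(N)\le kF'_k(N)$, the skeleton is right---rescale $A$ and replace each $d_i$ by a prime of the form $Ld_i+r$, so that the prime-$k$-APs sit inside $LA+r\{0,\dots,k-1\}$---but the prime-selection step fails as written. ``Short-interval prime-gap bounds'' cannot produce a prime in $[Ld_i, Ld_i+k-1]$: this interval has \emph{fixed} length $k$, while prime gaps near $x$ are typically of order $\log x$ and certainly not bounded by any constant. More seriously, Dirichlet's theorem does not let you align all the $r_i$ to a common value: you need a single pair $(L,r)$ for which $Ld_1+r,\dots,Ld_N+r$ are \emph{simultaneously} prime, whereas Dirichlet handles only one linear form at a time and says nothing about a common $L$. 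Simultaneous primality of the $N$ forms $L\mapsto Ld_i+r$ is a prime-tuples statement and, in the form needed here, is the Green--Tao theorem, not Dirichlet. (Your narrow-window claim is also inconsistent with $p_i\approx Ld_i$: if $d_1\ll d_N$ then $p_1/p_N\approx d_1/d_N$, which need not exceed $k/(k+1)$.)

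The paper follows exactly your skeleton but resolves all three issues at once by invoking Green--Tao: one finds $u,v$ with $v, u+v,\dots, ud_N+v$ all prime and lying in $[(1-\delta)X,X]$. Setting $p_i=ud_i+v$ gives $r_i=v$ for every $i$, so the enlarging set is $\{0,v,\dots,(k-1)v\}$ of size exactly $k$; and the narrow-window bound $p_i/p_N>1-\tfrac1{4k}$ follows from taking $\delta$ small, since one may choose $v$ large compared with $ud_N$. The paper also remarks that one can dispense with Green--Tao at the price of factors logarithmic in $N$, which would still suffice for the equivalence of the two conjectures; but neither Dirichlet nor standard prime-gap bounds recover the clean factor $k$.
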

\begin{proof}
Suppose first we have a set of primes $p_1 < \dots < p_N$ and an interval $I$ of length $kp_N$ so that $\# A = G_k(N)$, where $A = \bigcup_{i = 1}^N\{ x \in I : p_i | x \}$. Note that $A$ obviously contains a progression of length $k$ and common difference $p_i$, for each $i$, and therefore $F'_k(N) \leq G_k(N)$.

In other other direction, suppose we have a set $A$ attaining the bound $F'_k(N)$, that is to say $\# A = F'_k(N)$ and $A$ contains, for $i = 1,\dots, N$, a progression $\{ a_i + jd_i : j = 0,1,\dots, k-1\}$. By translating if necessary, we may assume that $A$ consists of positive integers. Let $\delta \in (0,\frac{1}{2})$ be a quantity to be specified shortly. By the theorem of the first author and T.~Tao \cite[Theorem 1.2]{green-tao}, we may find positive $u$ and $v$ such that all of the numbers $v, u+v, \dots d_N u + v$ are prime and lie in some interval $[(1 - \delta) X, X]$, $X \geq 100$. Set $p_i := d_i u + v$. Note that $\frac{v}{u+v} \geq 1 - \delta$, which rearranges as $\frac{v}{u} \geq \frac{1}{\delta} - 1$, hence
\begin{equation}\label{eq32} \frac{v}{u} >  4\max A\end{equation}  provided that $\delta$ is chosen sufficiently small.
Note also that 
\begin{equation}\label{eq35} \frac{p_i}{p_N}\geq \frac{v}{v + ud_N} = \frac{1}{1 + \frac{u}{v} d_N} \geq 1 - \frac{1}{4k}\end{equation} if $\delta$ is small enough. In particular if $\delta$ is small enough then we have 
\begin{equation} \label{eq34}p_i >  \frac{3}{4}p_N \geq \frac{1}{2} p_N + \frac{1}{4}v > \frac{1}{2}p_N + u \max A\end{equation} by \eqref{eq32}.

Define $A' := u \cdot A + \{0,v,2v,\dots, (k-1)v\}$. The cardinality of $A'$ satisfies $\# A' \leq k F'_k(N)$, and 
\begin{equation}\label{a-cont} A' \supset \bigcup_{i = 1}^N\{ ua_i + jp_i : j = 0,1,\dots, k-1\}\end{equation} for $i = 1,\dots, N$. By the Chinese remainder theorem we may find $w$ so that $p_i | w + ua_i$ for $i = 1,\dots, N$. 

Set $I := w - \lfloor \frac{1}{2} p_N\rfloor + \{1,2,\dots, kp_N\}$. Obviously $I$ is an interval of length $kp_N$. Let $i \in \{1,\dots, N\}$. We claim that $w + ua_i + jp_i \in I$ for an integer $j$ if and only if $j \in \{0,1,\dots, k-1\}$. Since $w + ua_i + p_i\Z  = p_i \Z$, this implies that
\[ I \cap p_i \Z = \{ w + ua_i + jp_i : j = 0,1,\dots, k-1\},\] and hence by \eqref{a-cont}
\[ I \cap \bigcup_{i = 1}^N p_i \Z \subset w + A',\] whence
\[ G_k(N) \leq \# \big( I \cap \bigcup_{i = 1}^N p_i \Z   \big) \leq \# A' \leq k F'_k(N).\]
It remains to prove the claim. To prove the \emph{if} implication, it suffices in view of \eqref{a-cont} to show that $w + A' \subset I$. However it is obvious that $\min (w + A') \geq \min I$ (since all elements of $A'$ are positive) and moreover
\begin{align*}
\max(w + A') & \leq w + u \max A + (k-1) v \\ & < w + (k- \frac{1}{2}) v \;\; \mbox{by \eqref{eq32}} \\ & \leq w + (k - \frac{1}{2}) p_N \\ & \leq \max I.
\end{align*} 
This establishes the \emph{if} direction of the claim. To establish the \emph{only if} direction, it suffices to show that 
\begin{equation}\label{claim1}  w + ua_i - p_i < \min I\end{equation} and that 
\begin{equation}\label{claim2} w + ua_i + kp_i > \max I.\end{equation}
However by \eqref{eq34} we have
\[ w + ua_i - p_i < w + u(a_i - \max A) - \frac{1}{2}p_N \leq w - \frac{1}{2} p_N \leq \min I,\] so \eqref{claim1} does hold. Also,
\begin{align*}
w + ua_i + kp_i & > w + kp_i \; \; \mbox{since $A \subset \N$} \\ & \geq w + (k - \frac{1}{4}) p_N \; \; \mbox{by \eqref{eq35}} \\ & \geq w + kp_N - \lfloor \frac{1}{2} p_N \rfloor = \max I,
\end{align*}
the last step being a consequence of the fact that $p_N \geq (1-\delta) X \geq 50$.
Thus \eqref{claim2} also holds, and this completes the proof of the claim.
\end{proof}

\emph{Remark.} The use of the theorem of the first author and Tao is a little excessive. One could do without it using simpler arguments if one was prepared to settle for logarithmic losses.

\section{Small unions of progressions}

In this section we prove Theorem \ref{ck-lower}. Write $3 = p_1 < p_2 < \dots$ for the odd primes, and set $Q := \prod_{i = 1}^m p_i$, where $m = \lceil 10 \log k\rceil$. Note that $Q = k^{O(\log \log k)}$. 

Define a set $S$ to be the union of all progressions $\{ x_d + j d : j = 0,1,\dots, k-1\}$ where, for $d \in \{1,\dots, Q-1\}$, $x_d$ is the unique element of $\{1,\dots,Q\}$ congruent to $d^2 \md{Q}$. Evidently, $S$ contains a progression of length $k$ and common difference $d$, for all $d \in \{0,1,\dots, Q-1\}$.

Fix $j \in \{0,1\dots, k-1\}$. For each $i$ we have
\[ x_d + jd \equiv d^2 + jd \equiv (d + \frac{j}{2})^2 - \frac{j^2}{4} \md{p_i},\] and so $x_d + jd \md{p_i}$ takes values in a set of size $\frac{1}{2}(p_i + 1)$ as $d$ varies. Therefore
$x_d + jd \md{Q}$ takes values in a set of size $\prod_{i = 1}^m \frac{1}{2}(p_i + 1)$. Since, additionally, $0 < x_d + jd \leq kQ$, $x_d + jd$ takes values in a set of size $k \prod_{i = 1}^m\frac{1}{2}(p_i + 1)$. Therefore
\[ \#S \leq k^2 \prod_{i = 1}^m \frac{1}{2}(p_i + 1) = k^2 2^{-m} Q \prod_{i = 1}^m (1 + \frac{1}{p_i}). \]
Recalling that $m \sim 10\log k$, and using the bound $\prod_{i = 1}^m (1 + \frac{1}{p_i}) \ll \log m \lll k$, we see that 
\[ \# S \ll k^{-7} Q \] and so \[ \# S \leq Q^{1 - \frac{c}{\log \log k}}\] if $k$ is sufficiently large, for some absolute $c > 0$.

Now let $n$ be an arbitrary positive integer, set $N_n := Q^n$, and consider the set
\[ A_n := \{ s_0 + s_1 Q + \dots + s_{n-1} Q^{n-1} : s_0,\dots, s_{n-1} \in S\}.\] Then $\# A_n \leq (\# S)^n \leq N_n^{1 - \frac{c}{\log \log k}}$.
The set $A_n$ contains a progression of length $k$ and common difference $d_0 + d_1 Q + \dots + d_{n-1}Q^{n-1}$ for any choice of $d_i \in \{0,1,\dots, Q-1\}$, or in other words for all $d \in \{0,\dots, N_n - 1\}$.

Finally, suppose $N$ is an arbitrary positive integer. Choose $n$ minimal so that $N_n > N$, and set $A := A_n$. Then $A$ contains a progression of length $k$ and common difference $d$, for all $d \in \{1,\dots, N\}$. Moreover, 
\[ \# A \leq N_n^{1 - \frac{c}{\log \log k}} \leq (QN)^{1 - \frac{c}{\log \log k}} \ll_k N^{1 - \frac{c}{\log \log k}}.\] The result follows.

\section{Entropy inequalities in positive characteristic}\label{fin-field-ent}

In this section we give the proof of Theorem \ref{thm13}. Suppose that $\X$ and $\Y$ are two $\F_p^{\infty}$-valued random variables, both taking finitely many values. Suppose that 
\begin{equation}\label{to-prove-3} \bH(\X - \Y) \geq (1 + \eps) \sup_{r \neq -1} \bH(\X + r\Y).\end{equation} Our aim is to prove that $\eps = O(\frac{1}{\log p})$, which immediately implies Theorem \ref{thm13}.

The initial phases of the argument mirror the deduction of Conjecture \ref{eak} from Conjecture \ref{ak-conj}. We may assume that there is some $q$ such that $q \P((\X, \Y) = (x, y)) \in \Z$ for all $(x,y)$; if \eqref{to-prove-3} can be established in this case, uniformly in $q$, then the general result follows by an easy approximation argument on letting $q \rightarrow \infty$.

Now let $m$ be very large, write $n = mq$, and construct a set $B^{(n)} \subset (\F_p^{\infty})^{qm} \times (\F_p^{\infty})^{qm}$ as follows. Let it consist of all pairs $((x_1,\dots, x_{mq})$, $(y_1,\dots, y_{mq}))$ for which 
\[ \# \{ i : (x_i, y_i) = (x,y)\} = mq \P((\X, \Y) = (x,y)).\] By arguments essentially the same as we saw before, 
\[ \bH(\X + r\Y) = \frac{1}{n} \log \pi_r(B^{(n)}) + o_{n \rightarrow \infty}(1).\] Hence, taking $m$ sufficiently large (and observing that $(\F_p^{\infty})^{qm}$ is isomorphic to $\F_p^{\infty}$ as a vector space), we obtain arbitrarily large sets $B \subset \F_p^{\infty} \times \F_p^{\infty}$ such that 
\begin{equation}\label{to-prove-4} \# \pi_{-1}(B)  \geq \sup_{r \neq -1} (\# \pi_{r}(B))^{1 + \eps/2}. \end{equation}
Note in particular that $\pi_{-1}(B)$ becomes arbitrarily large.

For such a $B$, we construct a finite set $A \subset \F_p^{\infty}$ as follows. If $(x,y) \in B$ and $x \neq y$, include the entire progression (line) through $x$ and $y$ in $A$. The points on this line are $\frac{x + ry}{1 + r}$, for $r \neq -1$, and $y$. Therefore
\[ A \subset \pi_{\infty}(B) \cup \bigcup_{r \neq -1} \frac{1}{1 + r} \cdot \pi_r(B),\] and therefore
\[ \# A \leq p \sup_{r \neq -1} \pi_r(B).\]
On the other hand, $A$ contains a progression of length $p$ (line) and common difference $d$, for every $d \in \pi_{-1}(B) \setminus \{0\}$. Thus, writing $N := \pi_{-1}(B) - 1$, we have
\begin{equation}\label{upper-5} \# A \ll_p N^{\frac{1}{1 + \eps/2}}. \end{equation}

On the other hand we have the following result, whose proof we will supply shortly.

\begin{proposition}\label{prop41}
Suppose that $A \subset \F_p^{\infty}$ is a finite set containing a progression of length $p$ \emph{(}that is, a line\emph{)} and common difference $d$, for all $d$ in some set of size $N$. Then $\# A \gg_p N^{1 - \frac{\log 2}{\log p} - o(1)}$.
\end{proposition}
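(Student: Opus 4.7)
The plan is to reduce Proposition 4.1 to the Dvir--Kopparty--Saraf--Sudan (DKSS) lower bound for genuine Kakeya sets in $\F_p^k$, via a linear projection to a lower-dimensional space. Since $A$ is finite, we may fix a finite-dimensional subspace $V \cong \F_p^n$ of $\F_p^\infty$ containing both $A$ and the direction set $D$ (of size $N$).

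The core step is to construct a linear surjection $\phi \colon V \to \F_p^k$, with $k = \lfloor \log_p N \rfloor - h(N)$ for some slowly growing $h(N) = o(\log N / \log p)$, such that $\phi(D)$ meets every line through the origin in $\F_p^k$: for each $v \in \F_p^k \setminus \{0\}$ there exists $d \in D$ with $\phi(d) \in \F_p^{\ast} v$. I would argue probabilistically. For a uniformly random linear $\phi$ and fixed nonzero $v$, the random variable $X_v := \#\{d \in D : \phi(d) = v\}$ has mean $N/p^k$; moreover the images $\phi(d_1), \ldots, \phi(d_n)$ are jointly independent and uniform on $\F_p^k$ whenever $d_1, \ldots, d_n \in D$ are linearly independent. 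Exploiting this, I would aim to establish a Poisson-type tail bound $\P(X_v = 0) \lesssim \exp(-cN/p^k)$; a union bound over the $p^k - 1$ directions then succeeds whenever $N/p^k \gg k \log p$, i.e.\ $k \leq \log_p N - O(\log\log N / \log p)$.

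Given such a $\phi$, the image $\phi(A) \subseteq \F_p^k$ contains a full line in every direction of $\F_p^k$ (namely $\phi(L_d)$ for any $d \in D$ with $\phi(d) \parallel v$), hence is a Kakeya set in $\F_p^k$. By DKSS one then has $|\phi(A)| \gg_p (p/2)^k$. Combining with $|A| \geq |\phi(A)|$ and
\[
\left(\frac{p}{2}\right)^k = N^{1 - \log 2/\log p} \cdot \left(\frac{p}{2}\right)^{-h(N)} = N^{1 - \log 2/\log p - o(1)}
\]
(since $(p/2)^{-h(N)} = N^{-o(1)}$), the proposition follows. The main technical obstacle is the projection lemma: a naive second-moment estimate gives only $\P(X_v = 0) \lesssim p^k/N$ and hence $k \lesssim \tfrac{1}{2}\log_p N$, which costs half of the desired exponent. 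Attaining Poisson-type decay demands a more delicate argument --- either a higher- or exponential-moment calculation leveraging joint independence of $\phi$ on linearly independent subsets of $D$, or, equivalently, a greedy coordinate-by-coordinate construction of $\phi$ that keeps the fiber sizes $|\phi^{-1}(v) \cap D|$ balanced throughout.
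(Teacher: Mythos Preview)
Your overall architecture---project $A$ linearly down to some $\F_p^k$ with $k \approx \log_p N$ and then invoke the DKSS bound $(p/2)^k$---is exactly the paper's strategy. Where you diverge is at the projection step, and there you have made things harder than necessary and left the argument incomplete.

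You try to find a single linear $\phi$ for which $\phi(D)$ already meets every direction in $\F_p^k$. As you yourself flag, this would require Poisson-type concentration for the fibre counts $X_v$, while the second moment gives only $\P(X_v=0)\lesssim p^k/N$, costing a factor of $2$ in the exponent. Your proposed fixes (exponential moments exploiting independence on linearly independent tuples, or a greedy coordinate-by-coordinate construction) are not worked out, and it is not clear either goes through without further hypotheses on the linear structure of $D$---about which nothing is assumed.

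The paper sidesteps this obstacle entirely by decoupling the two tasks. It projects to $\F_p^n$ with $n$ the \emph{smallest} integer such that $p^n \geq N$, and asks only that $\#\pi(\mathscr{D}) \geq N/2$. This follows from the elementary collision count (expected number of coincidences $\leq p^{-n}\binom{N}{2}\leq N/2$ for a random linear $\pi$) together with Cauchy--Schwarz---precisely the second-moment calculation you already have. Since $p^n \leq pN$, the image $\pi(\mathscr{D})$ then has density $\geq 1/(2p)$ in $\F_p^n$. One now covers all of $\F_p^n$ by $O_p(n\log p)=O_p(\log N)$ translates of $\pi(\mathscr{D})$ via the standard greedy covering lemma (Corollary~\ref{a3}), and takes the corresponding union of dilated translates of $\pi(A)$ to obtain a genuine Kakeya set $A_3\subset\F_p^n$ with $\#A_3 \ll_p (\log N)\,\#A$. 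DKSS gives $\#A_3 \geq (p/2)^n \geq N^{1-\log 2/\log p}$, and the $\log N$ factor is absorbed into the $o(1)$.

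So the gap you identified is real, but the right fix is not to push harder on concentration: it is to replace that step by the two-stage ``project to get positive density, then translate to cover'' argument, which needs nothing beyond the second moment.
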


Combining Proposition \ref{prop41} with the construction of $A$ satisfying \eqref{upper-5} immediately gives the desired upper bound $\eps = O(\frac{1}{\log p})$, thereby concluding the proof of Theorem \ref{thm13}.

It remains to prove Proposition \ref{prop41}.

\begin{proof}[Proof of Proposition \ref{prop41}] Set $A_1 := A$. In its initial stages, the proof of this result goes along rather similar lines to that of Proposition \ref{prop21}, only it is rather easier. The use of random projections in a similar context may be found in \cite[\S 3]{ellenberg-oberlin-tao}.
Let $n$ be the smallest positive integer for which $p^n \geq N$. 

Since $A_1$ is finite, it is contained in some copy of $\F_p^M$. Let $\pi : \F_p^M \rightarrow \F_p^n$ be a random linear map, selected by choosing the images of the basis vectors $e_1,\dots, e_M$ uniformly at random from $\F_p^n$. Set $A_2 := \pi(A_1)$; evidently $\# A_2 \leq \# A_1$. Let $\mathscr{D}$ be the set of common differences of progressions (of length $p$) lying in $A_1$. Then $A_2$ contains a progression of length $p$ and common difference $\pi(d)$, for every $d \in \mathscr{D}$.

Put some arbitrary order $\prec$ on $\mathscr{D}$, and suppose that $d \prec d'$. Then $\pi(d) = \pi(d')$ if and only if $\pi(d - d') = 0$. However, $\pi(d - d')$ is uniformly distributed in $\F_p^n$, and so the probability of this happening is $p^{-n}$. It follows that the expected number of pairs $(d, d')$ with $d \prec d'$ and $\pi(d) = \pi(d')$ is $p^{-n} \binom{N}{2} \leq \frac{1}{N} \binom{N}{2} \leq N/2$. Pick some map $\pi$ for which the number of such pairs is at most $N$. For each $v \in \F_p^n$, write $f(v) := \# \pi^{-1}(v)$. Then we have $\sum_v \binom{f(v)}{2} \leq N/2$, from which we obtain, since $\sum_v f(v) = N$, that $\sum_v f(v)^2 \leq 2N$. By Cauchy-Schwarz, 
\[ N^2 = \big(\sum_v f(v)\big)^2 \leq \# \{ v : f(v) \neq 0\} \sum_v f(v)^2,\] and therefore there are at least $N/2$ values of $v$ for which $f(v) \neq 0$. From the choice of $n$ it is clear that $p^n \leq pN$, and so at least $(\# \F_p^n)/2p$ elements of $\F_p^n$ lie in the image of $\pi$, or in other words are common differences of progressions in $B$.

By a random translation argument (see Corollary \ref{a3}), there is a set $A_3 \subset \F_p^n$, $\# A_3 \ll (n p \log p) \# A_2$, containing a line in every direction. That is, $A_3$ is a finite field Besicovitch set.

Now we bring in bounds on the size of such sets of a strength which, famously, are available in the finite field setting but not in characteristic zero. By the main result of \cite{dkss} we have $\# A_3 \geq (p/2)^n = (p^n)^{1 - \frac{\log 2}{\log p}} \geq N^{1 - \frac{\log 2}{\log p}}$. The proposition follows.
\end{proof}

\emph{Remarks.} Note that here it was crucial to have an effective lower bound on the size of Kakeya sets for fixed $p$ but with $n \rightarrow \infty$. For this, the celebrated work of Dvir \cite{dvir} on the Kakeya problem would not suffice. However (at the cost of weakening the exponents slightly) we could have used the main result of \cite{ss}, which has a slightly simpler proof than that of \cite{dkss}.

The $O(\frac{1}{\log p})$ term in Theorem \ref{ck-lower} is sharp. To see this, pick $a,b,b'$ independently and uniformly from $\F_p$, and  define random variables $\X, \Y$ taking values in $\F_p^2$ by
\[ \X = (a + b, ab), \quad \Y = (a + b', ab').\] Then
\[ \X - \Y = (b - b', a(b - b')),\]  which is almost uniformly distributed on $\F_p^2$: a short calculation gives
\[ \bH(\X - \Y) = 2 \log p + O(\frac{\log p}{p}).\] By contrast, if $r \neq -1$ then
\[ \frac{\X + r\Y}{1 + r} = \big(  a + \frac{b + rb'}{1 + r}, a \cdot \frac{b + rb'}{1 + r})  \big), \] and so $\X + r\Y$ is supported on a dilate of the set $V := \{(u + v, uv) : u,v \in \F_p\}$, which has cardinality $\frac{1}{2}p^2 + O(p)$. Therefore
\[ \bH(\X + r\Y) \leq 2 \log p - \log 2 + O(\frac{\log p}{p}).\]
Cognoscenti will recognise $V$ as being equivalent to the well-known construction of optimal Kakeya sets in $\F_p^2$, due to Mockenhaupt and Tao \cite{mockenhaupt-tao}.

\appendix

\section{Covering by translates}

In this section we review some standard lemmas on random translates.

\begin{lemma}\label{add-comp}
Suppose that $S \subset \{1,\dots, X\}$ is a set. Then there is a set $T$ of size $\ll \frac{X}{\# S} \log X$ such that $S + T \supset \{1,\dots, X\}$. 
\end{lemma}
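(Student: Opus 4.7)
The plan is to use a standard probabilistic/greedy covering argument. The key observation is that covering $\{1,\dots,X\}$ by translates of $S$ is equivalent to requiring $T$ to hit the set $y - S$ for every $y \in \{1,\dots,X\}$, and each such target set has size $\# S$ inside the ambient interval $\{1-X,\dots,X-1\}$ of length $\sim 2X$.

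I would proceed greedily. Let $U_0 := \{1,\dots,X\}$, and at each step choose $t_i \in \{1-X,\dots,X-1\}$ maximizing $\#\bigl((S + t_i) \cap U_{i-1}\bigr)$, then set $U_i := U_{i-1} \setminus (S + t_i)$. The averaging step is straightforward: for every $y \in U_{i-1}$, there are exactly $\# S$ translates $t$ in $\{1-X,\dots,X-1\}$ with $y \in S + t$ (namely $t = y - s$ for $s \in S$). Summing and dividing by $2X-1$, the average value of $\#((S+t) \cap U_{i-1})$ over $t$ in this interval is at least $\# U_{i-1} \cdot \# S/(2X)$, so this bound is attained by some $t_i$. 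Hence
\[ \# U_i \leq \# U_{i-1}\Bigl(1 - \frac{\# S}{2X}\Bigr), \]
which iterates to $\# U_k \leq X \exp\bigl(-k\,\# S/(2X)\bigr)$.

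Choosing $k = \lceil (2X \log X)/\# S \rceil$ forces $\# U_k < 1$, hence $U_k = \emptyset$, so $T := \{t_1,\dots,t_k\}$ satisfies $S + T \supset \{1,\dots,X\}$ and $\# T \ll \frac{X}{\# S}\log X$ as required.

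There is no real obstacle here; the only minor point to watch is that the translates $t_i$ are allowed to be negative (lying in $\{1-X,\dots,X-1\}$), which is fine since the statement imposes no sign restriction on $T$. An alternative, essentially equivalent, route would be to pick $T$ as a random subset of $\{1-X,\dots,X-1\}$ of size $\asymp (X \log X)/\# S$ and union-bound over the $X$ targets $y$; I would present the greedy version since it makes the constants completely transparent.
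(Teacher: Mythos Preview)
Your proof is correct and essentially identical to the paper's: both run the greedy covering argument, choosing each $t_i$ to maximise $\#\bigl((S+t_i)\cap U_{i-1}\bigr)$, using the double-count $\sum_t \#\bigl((S+t)\cap U_{i-1}\bigr) = \#U_{i-1}\cdot\#S$ to get the contraction factor $1 - \#S/(2X)$, and iterating until the uncovered set is empty. The only cosmetic difference is your range $\{1-X,\dots,X-1\}$ versus the paper's $\{-X+1,\dots,X\}$, which makes no material difference.
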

\begin{proof}
We inductively define $t_1,t_2,\dots \in \{-X+1,\dots, X\}$ and $A_i := \{1,\dots, X\} \setminus \bigcup_{j = 1}^{i}(S + t_j)$ such that, given the choice of $t_1,\dots, t_i$, $\# A_{i+1}$ is as small as possible. We have
\[ \sum_t \# \big(A_i \cap (S + t)\big) = \# A_i \# S,\] and so \[ \max_t \# \big(A_i \cap (S + t)\big) \geq \# A_i \frac{\# S}{2X}.\]
Therefore
\[ \# A_{i+1} \leq \# A_i \big(1 - \frac{\# S}{2X}\big).\]
This process terminates with $\# A_i < 1$ (and hence $\# A_i = 0$) in $\ll \frac{X}{\# S}\log X$ steps.
\end{proof}

\begin{lemma}\label{lema2}
Suppose that $S \subset \F_p^n$ is a set. Then there is a set $T \subset \F_p^n$ of size $\ll \frac{p^n}{\# S} n \log p$ such that $S + T = \F_p^n$.
\end{lemma}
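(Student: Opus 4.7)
The plan is to mimic the greedy covering argument of Lemma \ref{add-comp}, but now working in the group $\F_p^n$, where the averaging is slightly cleaner because we are in a group of size $p^n$ rather than an interval of length $X$.

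Concretely, I will build $T = \{t_1, t_2, \dots\}$ inductively. Set $A_0 := \F_p^n$, and having chosen $t_1, \dots, t_i$, set
\[ A_{i+1} := \F_p^n \setminus \bigcup_{j=1}^{i+1}(S + t_j), \]
where $t_{i+1}$ is chosen so as to minimise $\#A_{i+1}$. The key averaging observation is that
\[ \sum_{t \in \F_p^n} \#\bigl(A_i \cap (S + t)\bigr) = \#A_i \cdot \#S, \]
since every pair $(a,s) \in A_i \times S$ contributes to exactly one translate (namely $t = a - s$). Hence the optimal $t_{i+1}$ satisfies
\[ \#\bigl(A_i \cap (S + t_{i+1})\bigr) \geq \frac{\#A_i \cdot \#S}{p^n}, \]
which gives the recursion
\[ \#A_{i+1} \leq \#A_i \Bigl(1 - \frac{\#S}{p^n}\Bigr). \]

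Iterating, $\#A_k \leq p^n (1 - \#S/p^n)^k \leq p^n \exp(-k \#S / p^n)$. Choosing $k$ so that this quantity drops below $1$ requires $k \gg \frac{p^n}{\#S} \log p^n = \frac{p^n}{\#S} n \log p$ steps, at which point $A_k = \emptyset$ and $S + T = \F_p^n$ with $\#T \ll \frac{p^n}{\#S} n \log p$, as desired.

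There is no real obstacle here; this is the standard greedy covering lemma. The only thing to watch is that the averaging identity is an equality (rather than an inequality with a factor of $2$, as in Lemma \ref{add-comp}) because $\F_p^n$ is a group and every translate $S + t$ lies entirely inside $\F_p^n$, so no boundary effects arise.
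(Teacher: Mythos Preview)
Your proof is correct and is precisely the intended argument: the paper itself simply writes ``Very similar to the previous lemma, and left as an exercise,'' and what you have written is exactly that exercise carried out. The only cosmetic point is that when you say the process ``requires $k \gg \frac{p^n}{\#S} n \log p$ steps'' you mean that taking $k$ of this order \emph{suffices}; the argument is otherwise clean.
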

\begin{proof}
Very similar to the previous lemma, and left as an exercise.
\end{proof}

\begin{corollary}\label{a3}
Suppose that $A \subset \F_p^n$ is a set containing a $k$-term arithmetic progression with common difference $d$, for all $d$ lying in some set $\mathscr{D}$ of size $\delta p^n$. Then there is a set $A'$, $\# A' \ll_{k,n} \log p \cdot  \# A$, containing a $k$-term arithmetic progression with every common difference.
\end{corollary}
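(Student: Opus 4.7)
The plan is to combine Lemma \ref{lema2} with the same ``Minkowski-sum with a short progression'' trick already used in Proposition \ref{prop21}. First I would apply Lemma \ref{lema2} to the set $\mathscr{D}$: since $\#\mathscr{D} = \delta p^n$, we obtain a translate set $T \subset \F_p^n$ with
\[ \#T \ll \frac{n \log p}{\delta}, \qquad \mathscr{D} + T = \F_p^n. \]
Thus every $d \in \F_p^n$ admits a decomposition $d = d' + t$ with $d' \in \mathscr{D}$ and $t \in T$.

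Next I would define
\[ A' := A + \{0, 1, \dots, k-1\} \cdot T. \]
The size bound is immediate: $\#A' \leq k \cdot \#T \cdot \#A \ll_{k,n,\delta} \log p \cdot \#A$, which matches the claim (absorbing the $\delta$ and $n$, $k$ dependence into the implicit constant).

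For the covering property, fix any target common difference $d \in \F_p^n$ and write $d = d' + t$ with $d' \in \mathscr{D}$, $t \in T$. By hypothesis there is some $a \in \F_p^n$ so that $\{a + jd' : 0 \leq j \leq k-1\} \subset A$. Adding $jt$ termwise gives
\[ \{a + jd' + jt : 0 \leq j \leq k-1\} = \{a + j(d' + t) : 0 \leq j \leq k-1\} = \{a + jd : 0 \leq j \leq k-1\}, \]
and every element of this set lies in $A + \{0, 1, \dots, k-1\} \cdot t \subset A'$. So $A'$ contains a $k$-term AP of common difference $d$, as required.

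There is no real obstacle here — the content is all in Lemma \ref{lema2}, whose proof by the standard greedy/probabilistic covering argument is indicated in the lemma itself. The only thing to watch is that the statement of the corollary writes the bound as $\ll_{k,n} \log p \cdot \#A$ rather than $\ll_{k,n,\delta} \log p \cdot \#A$; in the two places where the corollary is invoked in the paper ($\delta \gg_{k,n} 1$ in the deduction of Conjecture~\ref{usp} from Conjecture~\ref{ff-conj}, and $\delta \geq 1/(2p)$ in Proposition~\ref{prop41}, where an extra factor of $p$ is anyway absorbed into the $\log p$) this looseness is harmless.
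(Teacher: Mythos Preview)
Your proof is correct and is essentially identical to the paper's: the paper also applies Lemma~\ref{lema2} with $S = \mathscr{D}$ and sets $A' = A + \big(\{0\} \cup T \cup 2\cdot T \cup \dots \cup (k-1)\cdot T\big)$, which is your $A + \{0,1,\dots,k-1\}\cdot T$. Your remark about the hidden $\delta$-dependence is apt; in the application inside Proposition~\ref{prop41} the paper indeed records the bound as $\#A_3 \ll (np\log p)\,\#A_2$, making the extra factor of $p$ (coming from $\delta \geq 1/(2p)$) explicit there.
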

\begin{proof}
Apply Lemma \ref{lema2} with $S = \mathscr{D}$, and let $T$ be the resulting set. Then take $A' = \bigcup_{x \in \{0\} \cup T \cup \dots \cup (k-1) \cdot T} (A + x)$. 
\end{proof}

\end{document}